\documentclass[11pt,reqno,oneside]{amsart}
\usepackage[a4paper,top=3.5cm,bottom=3.5cm,left=3cm,right=3cm]{geometry}
\usepackage[utf8]{inputenc}
\usepackage[english]{babel}
\selectlanguage{english}
\usepackage{amsfonts}
\usepackage{amsmath}
\usepackage{amsthm}
\usepackage{amssymb}
\usepackage{comment}
\usepackage{float}
\usepackage{textcomp}
\usepackage{faktor}
\usepackage{setspace}
\usepackage[dvipsnames]{xcolor}
\usepackage{graphicx}
\usepackage{fancybox}
\usepackage{mathrsfs}
\usepackage{mathtools}
\usepackage{tikz}
\usepackage{centernot}
\usepackage{esint}
\usepackage{algpseudocode}
\usepackage{framed}
\usepackage{hyperref}
\usepackage[bottom]{footmisc}
\usepackage[square, sort, numbers]{natbib}
\usepackage[T1]{fontenc}
\usepackage{mathtools}
\usepackage{graphicx}
\usepackage{epstopdf}
\usepackage{listings}
\usepackage{fancyhdr}
\usepackage{afterpage}
\usepackage{ dsfont }
\usepackage{cleveref}
 \usepackage[foot]{amsaddr}
%\usepackage{showkeys}

%Comandi tuoi

\DeclarePairedDelimiter{\spam}{ \langle } { \rangle }

\newcommand{\sfrac}[2]{ 
	{\raise0.8ex\hbox{$#1$} \!\mathord{\left/ 
			{\vphantom {#1 #2}}\right.\kern-\nulldelimiterspace} 
		\!\lower0.8ex\hbox{$#2$}} 
}

%Comandi non tuoi

\newtheorem{teo}{Theorem}[section]
\newtheorem{lema}[teo]{Lemma}
\newtheorem{co}[teo]{Corollary}
\newtheorem{prop}[teo]{Proposition}

\newtheorem{hypothesis}[teo]{Hypothesis}

\theoremstyle{definition}
\newtheorem{defin}[teo]{Definition}

\newtheorem{obs}[teo]{Remark}

\newcommand{\R}{\mathbb{R} }
\newcommand{\E}{\mathbb{E} }

\newcommand{\F}{\mathcal{F}}
\newcommand{\calG}{\mathcal{G}}
\newcommand{\calA}{\mathcal{A}}
\newcommand{\calU}{\mathcal{U}}
\newcommand{\calZ}{\mathcal{Z}}

\newcommand{\calS}{\mathcal{S}}

\newcommand{\calH}{\mathcal{H}}

\newcommand{\hatu}{\hat{u}}

\newcommand{\hatc}{\hat{c}}

\newcommand{\tZ}{\widetilde{\mathcal{Z}}}

\renewcommand{\t}{t \in [0,T]}

\newcommand\freefootnote[1]{%
  \begingroup
  \renewcommand\thefootnote{}\footnote{#1}%
  \addtocounter{footnote}{-1}%
  \endgroup
}
\linespread{1.2}

\title{Lifting of Volterra processes:\\ optimal control in UMD Banach spaces}

\author{Giulia di Nunno$^{\dagger, \ast}$ \and Michele Giordano$^{\ddagger}$ \\ June 22nd 2023}
\begin{document}
	\allowdisplaybreaks
\maketitle

\numberwithin{equation}{section}

\freefootnote{$^{\dagger}$Department of Mathematics, University of Oslo, P:O: Box 1053 Blindern, N-0316 Oslo, Email: giulian@math.uio.no.}
\freefootnote{$^{\ast}$Department of Business and Management Science, NHH Norwegian School of Economics, Helleveien 30, N-5045 Bergen.}
\freefootnote{$^{\ddagger}$Department of Mathematics, University of Oslo, P:O: Box 1053 Blindern, N-0316 Oslo, Email: michelgi@math.uio.no}
\vspace{-1 cm}

%%%%%%%%%%%%%%%%%%%%%%%%%%%%%%%%%%%%%%%%%%%%%%%%%%%%%%%%%%%%%%%%%%%%%%%%%%%%%%%%%%%%%%%%%%%

\begin{abstract}
	\noindent We study a stochastic control problem for a Volterra-type controlled forward equation with past dependence obtained via convolution with a deterministic kernel. To be able to apply dynamic programming to solve the problem, we lift it to infinite dimensions and we formulate a UMD Banach-valued Markovian problem, which is shown to be equivalent to the original finite-dimensional non-Markovian one. We characterize the optimal control for the infinite dimensional problem and show that this also characterizes the optimal control for the finite dimensional problem.
\end{abstract}
 \noindent \textbf{Keywords}: Backward stochastic integral equation; Dynamic programming principle; \\ Hamilton Jacobi Bellman; Optimal control; UMD Banach space; Markovian Lift;\\
\textbf{MSC 2020}: 60H10; 60H20; 93E20; 35R15; 49L20; 91B70; 
\section{Introduction}

We intend to minimize a performance functional of the form
\begin{equation}\label{J(u)}
	J(t,x,u)=\E\left[\int_t^T F(\tau,X^u_\tau,u_\tau)d\tau+G(X^u_T) \right],
\end{equation}
where $\t$, and $x$ is given in the controlled Volterra-type dynamics of the process $X_\tau^u$:
\begin{align}
	X^u(\tau)&=x(\tau)+\int_t^\tau K(\tau-s)\Big[\beta(s,X^u(s))+\sigma(s,X^u(s))R(s,X^u(s),u(s))\Big]ds\nonumber \\
	&\quad +\int_t^\tau K(\tau-s) \sigma(s,X^u(s))dW(s)\label{X_t}.
\end{align}
Here $x:[0,T]\longrightarrow \R $, $\beta:[0,T]\times \R \longrightarrow \R$, $\sigma:[0,T]\times \R \longrightarrow \R$, $R:[0,T]\times \R \times \calU \longrightarrow \R$, and the convolution kernel $K:[0,T]\longrightarrow\R^+$ are all measurable mappings on which additional hypothesis will be stated later on, and $u:[0,T]\times \Omega \longrightarrow \calU\subset \R$ is an admissible control. Also, $W$ is a real-valued Brownian motion on a complete filtered probability space $(\Omega, \mathcal F, \mathbb P)$.

Stochastic control problems as \eqref{J(u)}-\eqref{X_t} appear, e.g. when studying optimal advertising strategies (see e.g. \cite{Antony} for the case of Volterra dynamics in \eqref{X_t} and \cite{GMS2009, GM2006, LZh2020} for the case of delay). Other cases of applications are found in electrodynamics \cite{electrodynamics} and in epidemiology \cite{epidemiological}.
 \noindent When dealing with such problems, one cannot directly apply a \textit{dynamic programming principle} (DPP) in view of the non Markovianity of the framework. While in some particular cases is still possible to derive the DPP also for Volterra forward dynamics (see \cite{Pham,Possamai, Bonaccorsi}), most authors approached the general problem by means of a maximum principle (see, e.g., \cite{DNG1,Yong1,Yong2,Oksendal1,Oksendal2} and references therein). Even though the maximum principle approach might seem practical, one usually has to impose regularity conditions on both the drift and volatility which are not always easy to satisfy.
In this paper, thanks to the developments on the \emph{lift} theory for Volterra processes (see \cite{Pham,CT,CTMulti,Note,Antony,Bonaccorsi}) we aim to move the stochastic control problem \eqref{J(u)}-\eqref{X_t} to an infinite dimensional UMD-Banach setting and solve the newly formulated problem by means of DPP. 

The main purpose of this lifting approach is to recover the Markov property for the forward process \eqref{X_t} which, in turns, allows us to derive a DPP in terms of the \emph{Hamilton-Jacobi-Bellman} (HJB) equations. In fact, one can show that solving the lifted problem is equivalent to solving the original one, with the fundamental difference that, by moving to an infinite dimensional setting, we work in a Markovian framework. Focusing on Markovian lifts, we assume that the kernel $K$ can be represented as $K(t)=\langle g,\calS_t^*\nu \rangle_{Y\times Y^*} $, for $\calS_t^*$ a uniformly continuous semigroup acting on a Banach space $Y^*$, $\nu\in Y^*$, $g\in Y$ with $Y$ the pre dual of $Y^*$ and pairing $\langle \cdot, \cdot \rangle_{Y\times Y^*}$. Examples of kernels that satisfy this condition can be found both in \cite{CT, CTMulti, Antony} and in the last section of this paper.

Our goal is to find $\hatu$ such that, for all $\t$,
\begin{equation}\label{defin: control problem}
    J(t,x,\hatu)=\inf_{u\in\mathds A}J(t,x,u),
\end{equation}
with $J(t,x,u)$ as in \eqref{J(u)} and for $u$ belonging to some admissible control set $\mathds A$ defined as 
$$\mathds A=\left\{u:[0,T]\times\Omega\longrightarrow \calU,\ \text{s.t.   $u$ is predictable} \right\},$$
where $\calU$ is a closed convex subset of $\R$ and the information flow is associated to the Brownian motion in \eqref{X_t}. Our approach consists in formulating a new infinite-dimensional Banach-valued optimization problem that can be shown to be equivalent to \eqref{J(u)}-\eqref{X_t} (in the sense that the optimal control $\hat u$ and optimal value $J(t,x,\hat u )$, $\t$ are the same of the original one) and then solve such infinite dimensional optimization problem, which is Markovian.
%The problem reformulated in the infinite dimension setting, follows the approach presented in \cite{FT,Masiero}, while its
The solution is achieved exploiting Malliavin calculus for \emph{unconditional martingale differences} (UMD) Banach spaces.

The Markovian lift to the infinite dimensional setting that we present here was originally introduced in \cite{CT}, and then developed in \cite{CTMulti} for the multi-dimensional case, and in \cite{Note} for a Lévy drivers. Our work can be seen as a generalization of the case presented in \cite{Pham}, in which a kernel $K$ that can be expressed as the Laplace transform of a measure is considered, and where the performance functional \eqref{J(u)} is of linear-quadratic type. Our work differs from \cite{Pham} as we are able to consider a broader class of kernels and performance functionals thanks to the nature of the lift we apply.

The present work introduces an element of novelty also with respect to infinite dimensional stochastic control. Indeed, we consider a setting which is different from both the ones presented in \cite{FT} and \cite{Masiero}. In \cite{FT} the authors consider a Hilbert valued forward controlled process, whereas in \cite{Masiero} the forward process has values in a general Banach space, but with a volatility term $\sigma$ not depending on $X$. Here we are able to take general volatility dynamics for the forward process and this will require to work in Banach spaces of the UMD type so to be able to apply Malliavin calculus techniques.

In the context of optimal control for lifted process, we also mention \cite{Bonaccorsi}. There, the authors follow an approach close to the one presented here. However, we remark that, in our framework, we are able to consider a wider class of kernel $K$ thanks to the nature of our lift, which allows us to work in UMD Banach spaces instead of Hilbert spaces. On the other side, in \cite{Bonaccorsi} the authors consider a forward equation driven by a Lévy process instead of a Brownian motion (like \eqref{X_t}). While the lift theory for Lévy-driven forward processes is available (see \cite{Note}), the optimal control of an infinite-dimensional Lévy-driven forward equation in the present setting is a topic for future research.

This paper is structured as follows: in Section 2 we  present some preliminary results both on the Gâteaux differentiability in general Banach spaces and on the lift for Volterra processes. We recall the essentials on UMD Banach spaces and some results of Malliavin calculus in this framework. In Section 3 we give an existence and continuity result  for the forward equation, and in Section 4 we introduce the backward equation and the Hamiltonian function associated with the lifted optimization problem. Here we present a solution method via HJB equations. To conclude, in Section 5 we present a problem of optimal consumption where we obtain a characterization of the optimal control $u$ via DPP.

\section{Some preliminary results}
We recall some useful notions and results that are used throughout the paper. Then we show how the Markovian lift is performed in the present context of stochastic control \eqref{J(u)}-\eqref{X_t}. Lastly, we introduce UMD Banach spaces and state some crucial results for Malliavin calculus in this setting.
We refer to \cite{FT} for the results on Gâteaux derivatives and Banach spaces, to \cite{CT, CTMulti,Note, Antony} for the ones concerning Markovian lifts, and to \cite{UMD, MalliavinBanach, VanNerv,Clark-Ocone} for the results concerning UMD Banach spaces.

\subsection{The class of Gâteaux differentiable functions}
For a mapping $F:U\longrightarrow V$, with $U,V$ two Banach spaces, we say that the \emph{directional derivative at $u\in U$ in the direction $h\in U$} is defined as
\begin{equation*}
	\nabla F(u;h):=\lim_{s\rightarrow 0}\frac{F(u+sh)-F(u)}{s},
\end{equation*}
whenever the limit exists in the topology of $V$. The mapping $F$ is said to be \emph{Gâteaux differentiable at the point $u$} if it has directional derivative at $u$ in every direction and there exists an element $\nabla F(u)$ in $L(U,V)$ such that $\nabla F(u;h)=\nabla F(u)h$ for every $h\in U$. We call $\nabla F(u)$ the \emph{Gâteaux derivative at $u$}.
\begin{defin}
	A mapping $F:U\longrightarrow V$ belongs to $\calG^1(U;V)$ if it is continuous, Gâteaux differentiable for all $u\in\calU$ and $\nabla F:U\longrightarrow L(U;V)$ is strongly continuous, i.e. the map $\nabla F(\cdot)h:U\longrightarrow V$ is continuous for every $h\in U$.
\end{defin}
\begin{obs}
	Let $U,V,Z$ be three Banach spaces and $F\in\calG^1(U,V)$. If $G\in\calG^1(V,Z)$, then $G(F)\in\calG^1(U,Z)$ and $\nabla(G(F))(u)=\nabla G(F(u))\nabla F(u)$.
\end{obs}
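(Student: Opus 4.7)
The plan is to verify the three defining properties of $\calG^1(U,Z)$ for the composition $G(F)$: continuity, existence of the Gâteaux derivative with the prescribed formula, and strong continuity of that derivative. Continuity of $G(F)$ is immediate from the continuity of $F$ and $G$ (both guaranteed by membership in $\calG^1$), so the substance is concentrated in the chain rule and its strong continuity.

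For the chain rule, I would fix $u,h\in U$ and pass through a mean value identity. Setting $\varphi(\theta):=G\big(F(u)+\theta\, w_s\big)$ with $w_s:=F(u+sh)-F(u)$, the map $\theta\mapsto \varphi'(\theta)=\nabla G\big(F(u)+\theta w_s\big)w_s$ is continuous on $[0,1]$ by the strong continuity of $\nabla G$, so the fundamental theorem of calculus yields
\begin{equation*}
\frac{G(F(u+sh))-G(F(u))}{s}=\int_0^1 \nabla G\big(F(u)+\theta w_s\big)\,k_s\,d\theta,\qquad k_s:=\frac{w_s}{s}.
\end{equation*}
As $s\to 0$, $k_s\to \nabla F(u)h$ in $V$ by the Gâteaux differentiability of $F$, and $F(u)+\theta w_s\to F(u)$ uniformly in $\theta\in[0,1]$. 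I would then split
\begin{equation*}
\nabla G\big(F(u)+\theta w_s\big)k_s-\nabla G(F(u))\nabla F(u)h = \nabla G\big(F(u)+\theta w_s\big)\big(k_s-\nabla F(u)h\big)+\big[\nabla G\big(F(u)+\theta w_s\big)-\nabla G(F(u))\big]\nabla F(u)h,
\end{equation*}
where the second term tends to $0$ by strong continuity of $\nabla G$, and the first term is controlled via a uniform bound $\sup_{s,\theta}\|\nabla G(F(u)+\theta w_s)\|_{L(V,Z)}<\infty$. Dominated convergence then gives the chain rule $\nabla(G(F))(u)h=\nabla G(F(u))\nabla F(u)h$.

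Strong continuity of $u\mapsto \nabla G(F(u))\nabla F(u)$ follows by the same type of splitting: for a sequence $u_n\to u$ and fixed $h\in U$, use continuity of $F$ to get $F(u_n)\to F(u)$, strong continuity of $\nabla F$ to get $\nabla F(u_n)h\to \nabla F(u)h$, and then control $\|\nabla G(F(u_n))\nabla F(u_n)h-\nabla G(F(u))\nabla F(u)h\|_Z$ through the triangle inequality with an intermediate term $\nabla G(F(u_n))\nabla F(u)h$.

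The main obstacle I anticipate is the uniform operator bound needed in both arguments: strong continuity of $\nabla G$ alone does not give a priori operator-norm bounds, so I would invoke the uniform boundedness principle on the family $\{\nabla G(F(u)+\theta w_s)\}_{s,\theta}$, which is pointwise bounded on $V$ because each pointwise orbit is a continuous image of the compact set $\{F(u)+\theta w_s:\theta\in[0,1],\,|s|\le s_0\}$. Once this bound is in hand, the remaining estimates are straightforward, and all three properties of $\calG^1$ are obtained in one stroke.
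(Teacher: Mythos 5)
Your argument is correct and complete. The paper states this remark without proof (deferring to the reference for Gâteaux calculus), so there is no in-paper argument to compare against; your route --- the integral identity $G(F(u+sh))-G(F(u))=\int_0^1\nabla G(F(u)+\theta w_s)w_s\,d\theta$, which is legitimate because $\theta\mapsto\nabla G(F(u)+\theta w_s)w_s$ is continuous by strong continuity of $\nabla G$, followed by the two-term splitting and Banach--Steinhaus applied to the pointwise orbits over the compact set $\{F(u)+\theta w_s:\theta\in[0,1],\,|s|\le s_0\}$ --- is the standard proof, and you have correctly isolated the one genuinely delicate point, namely that strong continuity of $\nabla G$ gives no operator-norm bound for free. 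The same compactness-plus-uniform-boundedness device closes the strong-continuity step as well, since $\{F(u_n)\}\cup\{F(u)\}$ is compact, so nothing is missing.
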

We also introduce the \emph{partial directional derivative} for a mapping $F:U\times V \longrightarrow Z$, with $U$, $V$, $Z$ Banach spaces as 
\begin{equation*}
    \nabla_u F(u,v;h) := \lim_{s\rightarrow 0 } \frac{F(u+sh,v)-F(u, v)}{h},
\end{equation*}
with $u,h \in U$, $v\in V$ and the limit taken in the topology of $Z$. We say that $F$ is \emph{partially Gâteaux differentiable with respect to $u$ at $(u,v) \in U\times V$} if there exists $\nabla_u F : U\times V\longrightarrow L(U,Z)$ such that $\nabla_u F(u,v;h) = \nabla_uF(u,v)h$ for all $h\in U$.
\begin{defin}
	We say that $F:U\times V \longrightarrow Z$ belongs to the class $\calG^{1,0}(U\times V;Z)$ if it is continuous, Gâteaux differentiable with respect to $u$, for all $(u,v)\in U\times V$ and $\nabla_uF:U\times V \longrightarrow L(U,Z)$ is strongly continuous.
\end{defin}
\noindent For $F$ depending on additional arguments, the definition above can be easily generalized.
\begin{lema}\label{LEMMA 2.3 FT} 
	Given $U,V,Z$ three Banach spaces, a continuous map $F:U\times V\longrightarrow Z$ belongs to $\calG^{1,0}(U\times V, Z)$ provided the following conditions hold:
	\begin{enumerate}
		\item The partial directional derivatives $\nabla_uF(u,v;h)$ exist at every point $(u,v)\in U\times V$ and in every directoin $h\in U$.
  		\item For every $(u,v)$ the mapping $h\longmapsto\nabla_uF(u,v;h)$ is continuous from $U$ to $Z$.
		\item For every $h$, the mapping $\nabla_u F(\cdot, \cdot;h):U\times V\longrightarrow Z$ is continuous.
	\end{enumerate}
\end{lema}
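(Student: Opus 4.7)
The plan is to verify the two ingredients in the definition of $\calG^{1,0}(U\times V;Z)$: for each $(u,v)$ the partial directional derivative $h\mapsto\nabla_u F(u,v;h)$ must be a bounded linear operator, i.e.\ an element of $L(U,Z)$, and the assignment $(u,v)\mapsto\nabla_u F(u,v)$ must be strongly continuous as a map into $L(U,Z)$. Hypothesis (3) is exactly strong continuity, and once linearity is established, hypothesis (2) upgrades continuity to boundedness, since a continuous linear map between Banach spaces is automatically bounded. So the real content is showing that $h\mapsto\nabla_u F(u,v;h)$ is linear.

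Positive homogeneity is immediate from the definition of the directional derivative, so the task reduces to additivity. I would decompose
\begin{equation*}
\frac{F(u+s(h_1+h_2),v)-F(u,v)}{s}=\frac{F(u+sh_1,v)-F(u,v)}{s}+\frac{F(u+sh_1+sh_2,v)-F(u+sh_1,v)}{s},
\end{equation*}
noting that the first quotient tends to $\nabla_u F(u,v;h_1)$ by (1). For the second, I would introduce the auxiliary curve $\phi_s(t):=F(u+sh_1+tsh_2,v)$, $t\in[0,1]$, which by (1) is differentiable with $\phi_s'(t)=s\,\nabla_u F(u+sh_1+tsh_2,v;h_2)$, and by (3) has continuous derivative. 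The Banach-valued fundamental theorem of calculus then gives
\begin{equation*}
\frac{F(u+sh_1+sh_2,v)-F(u+sh_1,v)}{s}=\int_0^1\nabla_u F(u+sh_1+tsh_2,v;h_2)\,dt.
\end{equation*}
Joint continuity of the integrand on $[0,s_0]\times[0,1]$ (again from (3)) yields uniform convergence in $t$ to $\nabla_u F(u,v;h_2)$ as $s\to 0$, so the integral passes to the limit and the second quotient converges to $\nabla_u F(u,v;h_2)$. Combining gives $\nabla_u F(u,v;h_1+h_2)=\nabla_u F(u,v;h_1)+\nabla_u F(u,v;h_2)$.

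With linearity secured, (2) places $\nabla_u F(u,v)$ in $L(U,Z)$, and (3) is exactly strong continuity of $(u,v)\mapsto\nabla_u F(u,v)$. The main technical obstacle I anticipate is the passage to the limit under the Bochner integral: this is where hypothesis (3) does the essential work, turning the pointwise directional structure guaranteed by (1)--(2) into a genuinely linear derivative that varies continuously in the base point. The case $s<0$ follows by a symmetric choice of parameterisation and requires no new ideas.
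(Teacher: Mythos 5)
The paper states this lemma without proof, importing it verbatim from \cite{FT}, so there is no in-paper argument to compare against; judged on its own, your proof is correct and is essentially the standard argument used in that reference. The decomposition of the difference quotient, the reduction of additivity to a Banach-valued fundamental theorem of calculus along the segment $t\mapsto u+sh_1+tsh_2$ (whose $C^1$ regularity comes from (1) and (3)), and the passage to the limit under the integral via uniform continuity on a compact set are all sound; combined with the two-sided homogeneity that the definition of the directional derivative gives for free, linearity follows, (2) then upgrades the linear map to an element of $L(U,Z)$, and (3) is precisely the required strong continuity. No gaps.
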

\noindent We are going to use the following parameter depending contraction principle to study the regular dependence of the solution of stochastic differential equations on their initial data.
\begin{prop}\label{PROP 2.4 FT}
	Let $U,V,Z$ be Banach spaces and let $F:U\times V\times Z \longrightarrow U$ a continuous mapping satisfying 
	\begin{equation*}
		|F(u_1,v,z)-F(u_2,v,z)|\leq \alpha|u_1-u_2|,
	\end{equation*}
	for some $\alpha\in[0,1)$ and every $u_1,u_2 \in U$, $v \in V$, $z \in Z$. Let $\phi(v,z)$ denote the unique fixed point of the mapping $F(\cdot,v,z):U\longrightarrow U$. Then $\phi:V\times Z\longrightarrow U$ is continuous. If, in addition $F\in\calG^{1,1,0}(U\times V\times Z, U)$, then $\phi\in\calG^{1,0}(V\times Z,U)$ and
	\begin{equation*}
		\nabla_v\phi(v,z)=\nabla_uF\big(\phi(v,z),v,z\big)\nabla_v\phi(v,z)+\nabla_vF(\phi(v,z),v,z).
	\end{equation*}
\end{prop}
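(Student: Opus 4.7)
The plan has two stages: first continuity of the parameter-to-fixed-point map $\phi$, then its Gâteaux differentiability in $v$ together with the asserted identity.

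\textbf{Continuity.} The natural device is the fixed-point identity $\phi(v,z) = F(\phi(v,z), v, z)$. For two parameters $(v_1,z_1), (v_2,z_2)$ I would insert $F(\phi(v_2,z_2), v_1, z_1)$ in the triangle inequality and use $\alpha$-contractivity in the first argument to obtain
\[
(1-\alpha)\bigl|\phi(v_1,z_1) - \phi(v_2,z_2)\bigr| \leq \bigl|F(\phi(v_2,z_2), v_1, z_1) - F(\phi(v_2,z_2), v_2, z_2)\bigr|,
\]
whose right-hand side tends to zero as $(v_1,z_1) \to (v_2,z_2)$ by joint continuity of $F$.

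\textbf{Candidate derivative and verification.} Taking $s\to 0$ in the Lipschitz bound applied to $F(u+sh,v,z)-F(u,v,z)$ gives $\|\nabla_u F(u,v,z)\|_{L(U,U)}\leq\alpha<1$ pointwise, so $I-\nabla_u F(\phi(v,z),v,z)$ is invertible with bound $(1-\alpha)^{-1}$ via Neumann series. I then define the candidate
\[
\Psi(v,z) := \bigl(I - \nabla_u F(\phi(v,z),v,z)\bigr)^{-1} \nabla_v F(\phi(v,z), v, z) \in L(V,U),
\]
and show $\nabla_v\phi = \Psi$ directly. Fix $h\in V$ and set $\delta_s := \phi(v+sh,z)-\phi(v,z)$. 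Splitting the increment $F(\phi(v+sh,z),v+sh,z)-F(\phi(v,z),v,z)$ by inserting $F(\phi(v,z),v+sh,z)$ and applying the fundamental theorem of calculus for Gâteaux derivatives in each argument yields
\[
\delta_s = A_s \delta_s + s\, B_s h,
\qquad
A_s := \int_0^1 \nabla_u F(\phi(v,z) + \lambda \delta_s, v+sh, z)\, d\lambda,
\]
and $B_s := \int_0^1 \nabla_v F(\phi(v,z), v + \lambda s h, z)\, d\lambda$. Since $\|A_s\|\le\alpha$, this rearranges to $\delta_s/s = (I - A_s)^{-1} B_s h$. Contractivity forces $\delta_s\to 0$, so by strong continuity of $\nabla_u F$ and $\nabla_v F$ one has $A_s \to \nabla_u F(\phi(v,z),v,z)$ strongly and $B_s h \to \nabla_v F(\phi(v,z),v,z) h$, hence $\delta_s/s \to \Psi(v,z)h$. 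Strong continuity of $(v,z)\mapsto \Psi(v,z)h$ follows from the continuity of $\phi$ proved above combined with the same strong-operator convergence, which gives $\phi\in\calG^{1,0}(V\times Z, U)$.

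\textbf{Main obstacle.} The genuine subtlety is the fundamental theorem of calculus in the Gâteaux setting: expressing the finite difference of $F$ as the Bochner integral of $\nabla_u F$ along the segment $\phi(v,z) + \lambda\delta_s$ requires strong measurability of the integrand, which is exactly what the $\calG^{1,1,0}$ hypothesis (strong continuity of the partial Gâteaux derivatives) supplies. The same hypothesis then powers the passage to the limit $A_s\to\nabla_u F(\phi(v,z),v,z)$; crucially one never needs operator-norm convergence, only the strong-operator convergence of $(I-A_s)^{-1}$ applied to the fixed vector $B_s h$, and this matches precisely the strong continuity demanded by the $\calG^{1,0}$ conclusion.
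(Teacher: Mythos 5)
Your proof is correct and is the standard argument for the parameter-dependent contraction principle: the paper itself states Proposition \ref{PROP 2.4 FT} without proof (it is quoted from \cite{FT}), and the route there is exactly yours — the fixed-point identity plus $\alpha$-contractivity for continuity, the integral mean-value representation of the increments, and invertibility of $I-\nabla_uF$ from the pointwise bound $\|\nabla_uF\|\le\alpha$. The one step to state more carefully is the final limit: to pass from the strong convergences $A_s\to A:=\nabla_uF(\phi(v,z),v,z)$ and $B_sh\to\nabla_vF(\phi(v,z),v,z)h$ to $(I-A_s)^{-1}B_sh\to(I-A)^{-1}\nabla_vF(\phi(v,z),v,z)h$, you should use the identity $(I-A_s)^{-1}-(I-A)^{-1}=(I-A_s)^{-1}(A_s-A)(I-A)^{-1}$ together with the uniform bound $\|(I-A_s)^{-1}\|\le(1-\alpha)^{-1}$, so that $A_s-A$ is only ever evaluated on the fixed vector $(I-A)^{-1}\nabla_vF(\phi(v,z),v,z)h$ — the phrase ``strong-operator convergence of $(I-A_s)^{-1}$ applied to the fixed vector $B_sh$'' as written does not quite capture this, since $B_sh$ is not fixed and strong convergence of $A_s$ does not directly give strong convergence of the inverses.
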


\subsection{Lift approach to optimal control}

In the sequel, we exploit an infinite dimensional lift to reformulate the optimization problem \eqref{J(u)}-\eqref{X_t} in an infinite dimensional setting. Our first step is to rewrite $X^u$ in \eqref{X_t} in terms of a process $\calZ^u$ with values in a Banach space, using the lift procedure presented in \cite{CT}. Notice that we do not actually work in the affine framework of \cite{CT}, but the approach presented here is actually a particular case of the one introduced in \cite{Note}.

\begin{defin}\label{liftable}
    Let $Y$ be a Banach space with dual $Y^*$ and denote with $\langle\cdot,\cdot\rangle_{Y\times Y^{*}}$ the pairing between $Y$ and $Y^*$. We say that a kernel $K\in L^2_{loc}(\R_+,\R)$ is \emph{liftable} if there exist $g \in Y$, $\nu \in Y^*$ and a uniformly continuous semigroup $\calS_t^*$, $\t$ with generator $\calA^*$, acting on $Y^*$, such that
    \begin{itemize}
        \item  $K(t)=\langle g, \calS_t^*\nu\rangle_{Y\times Y^{*}}$
        \item $\calS_t^*\nu \in Y^*$ for all $t>0$ 
        \item $\int_0^t\|\calS_s^*\nu\|^2_{Y^*}ds<\infty$ for all $t>0$.
    \end{itemize}
    For notational simplicity we write $\spam{\cdot,\cdot}$ for $\spam{\cdot, \cdot}_{Y\times Y^{*}}$ when no confusion arises.
\end{defin}
%\vspace{2 mm}
\noindent From now on, we make the following assumption:
\begin{hypothesis}
    The kernel $K$ in \eqref{X_t} is liftable.
\end{hypothesis}
\noindent We rewrite $X^u$ as 
\begin{align*}
    X^u  (\tau)  &=x(\tau)+\int_t^\tau K(\tau-s)\Big[\beta(s,X^u(s))+\sigma(s,X^u(s))R(s,X^u(s),u(s))\Big]ds\\
    &\quad+\int_t^\tau K(\tau-s) \sigma(s,X^u(s))dW(s)\\
        &:= x(\tau) +\int_t^\tau K(\tau-s) dV^u(s),
\end{align*}
where 
    \begin{equation}\label{V(s)}
    dV^u(s) := \Big[\beta(s, X^u(s)) +\sigma(s, X^u(s))R(s,X^u(s),u(s))\Big] ds + \sigma(s,X^u(s))dW(s).     
    \end{equation}
\noindent Defining $\zeta$ as an element in $Y^*$ such that $x(\tau) =: \langle g,\calS_\tau^*\zeta\rangle$ we can now rewrite \eqref{X_t} as follows:
\begin{align}
    X^u(\tau) &= x(\tau)+\int_t^\tau K(\tau-s) dV^u(s)\nonumber\\
    &= \langle g, \calS_\tau^*\zeta\rangle + \int_t^\tau \langle g, \calS_{\tau-s}^*\nu\rangle dV^u(s)\nonumber\\
    &= \Bigg\langle g, \calS_\tau^*\zeta + \int_t^\tau \calS_{\tau-s}^*\nu dV^u(s)\Bigg \rangle\nonumber\\
    &=:\langle g, \calZ_\tau^u\rangle,\label{lift}
\end{align}
where $\calZ_\tau^u := \calS_\tau^* \zeta + \int_t^\tau \calS_{\tau-s}^*\nu dV^u(s)$. One can then check that $\calZ_\tau^u$ follows the dynamics:
\begin{equation}\label{Z_t}
    \calZ_\tau^u =\calS_t^* \zeta + \int_t^\tau \calA^*\calZ_s^u ds + \int_t^\tau \nu dV^u(s),
\end{equation}
In fact, we have that
\begin{align*}
    \int_t^\tau \calA^* \calZ^u_s ds &= \int_t^\tau \calA^* \left[ \calS_s^*\zeta + \int_t^s \calS^*_{s-v} \nu dV^u(v) \right]ds\\
    &= e^{\calA^*\tau }\zeta - e^{ \calA^*t}\zeta + \int_t^\tau \!\!\!\int_v^\tau \calA^* e^{\calA^*(s-v)}\nu ds dV^u(v)\\
    &=e^{ \calA^*\tau}\zeta - e^{ \calA^*t}\zeta + \int_t^\tau e^{\calA^*(\tau - v)}\nu dV^u(v) - \int_t^\tau \nu dV^u(v)\\
    &= \calZ_\tau^u - e^{ \calA^*t}\zeta - \int_t^\tau \nu dV^u(v),
\end{align*}
and, rearranging the terms we obtain \eqref{Z_t}.
\begin{obs}
    By defining $B(t, X^u(t), u(t)) := \beta(s, X^u(s))  \sigma(s, X^u(s))R(s,X^u(s), u(s))$, and exploiting \eqref{lift}, we actually get that the function $x(\tau)= \langle g, \calS_t^*\zeta \rangle$ is given by the expression
    \begin{align*}
        x(\tau) &= \E\left[X^u(\tau) - \int_t^\tau K(\tau-s) B(s,X^u(s), u(s)) ds\right]\\
            &= \left\langle g, \E\left[\calZ^u_\tau - \int_t^\tau \calS_{\tau-s}^*\nu B(s, X^u(s), u(s)) ds\right]\right\rangle
    \end{align*}    
\end{obs}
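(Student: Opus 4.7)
The plan is to derive both equalities directly from the forward dynamics \eqref{X_t} and the lifting identity \eqref{lift}; the only substantive analytical input is the vanishing expectation of the Itô integral and the linearity of the pairing.

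First, I would rewrite \eqref{X_t} using the shorthand $B(s,X^u(s),u(s)) := \beta(s,X^u(s)) + \sigma(s,X^u(s))R(s,X^u(s),u(s))$, so that
\begin{equation*}
X^u(\tau) = x(\tau) + \int_t^\tau K(\tau-s)\, B(s,X^u(s),u(s))\, ds + \int_t^\tau K(\tau-s)\,\sigma(s,X^u(s))\, dW(s).
\end{equation*}
Under the integrability hypotheses on $\sigma$ imposed in the next section and the square-integrability of $K$ on $[0,T]$ guaranteed by liftability, the stochastic integral on the right is a true martingale with zero expectation. Since $x(\tau)$ is deterministic, taking expectations and rearranging yields the first identity.

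For the second identity, I would plug in the kernel representation $K(\tau-s) = \langle g, \calS^*_{\tau-s}\nu\rangle$ and the lift $X^u(\tau) = \langle g, \calZ^u_\tau\rangle$ from \eqref{lift}. By linearity of the pairing in its first argument, $g$ can be pulled out of both the Riemann integral over $[t,\tau]$ and the expectation, producing
\begin{equation*}
\E\!\left[X^u(\tau) - \int_t^\tau K(\tau-s)\, B(s,X^u(s),u(s))\, ds\right] = \left\langle g,\ \E\!\left[\calZ^u_\tau - \int_t^\tau \calS^*_{\tau-s}\nu\, B(s,X^u(s),u(s))\, ds\right]\right\rangle.
\end{equation*}

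The main technical obstacle will be justifying the two interchanges of order. The vanishing of $\E$ applied to the stochastic integral is standard provided $s\mapsto K(\tau-s)\sigma(s,X^u(s))$ lies in $L^2([t,\tau]\times\Omega)$, which will follow from liftability of $K$ together with the growth assumptions on $\sigma$ to be stated in Section~3. Pulling the deterministic $g\in Y$ through a Bochner expectation of a $Y^*$-valued random variable is a direct consequence of linearity and continuity of $\langle g,\cdot\rangle:Y^*\to\R$, once one checks that $\calZ^u_\tau$ and $s\mapsto \calS^*_{\tau-s}\nu\, B(s,X^u(s),u(s))$ are Bochner-integrable in $Y^*$; the former follows from the third bullet of Definition~\ref{liftable} combined with linear growth of $\sigma$, the latter from the same bullet together with integrability of $B$.
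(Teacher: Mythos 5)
Your argument is correct and is exactly the (implicit) justification behind this remark, which the paper states without proof: take expectations in \eqref{X_t} so the It\^o integral vanishes, then substitute $K(\tau-s)=\langle g,\calS^*_{\tau-s}\nu\rangle$ and $X^u(\tau)=\langle g,\calZ^u_\tau\rangle$ and pull $g$ through the Bochner integrals by linearity and continuity of the pairing. You also correctly read $B$ as the drift $\beta+\sigma R$ of $V^u$ in \eqref{V(s)} (the remark's displayed definition of $B$ drops the ``$+$''), which is what makes both identities hold.
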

\noindent Set $\calZ_{\tau}^{u,g}:=\langle g, \calZ_{\tau}^u\rangle=X^u(\tau)$, and plug \eqref{V(s)} into \eqref{Z_t}, then we can rewrite \eqref{Z_t} in differential notation as 
\begin{equation}\label{dZ}
	d\calZ_{\tau}^u=\calA^*\calZ_{\tau}^u d{\tau}+\nu\Big(\beta({\tau},\calZ_{\tau}^{u,g})d\tau+\sigma({\tau},\calZ_{\tau}^{u,g})\big[R({\tau},\calZ_{\tau}^{u,g},u_{\tau})d{\tau}+dW_{\tau}\big]\Big),
\end{equation}
with $\calZ^u_t = e^{\calA^* t} \zeta := \zeta_t$. 
We also rewrite \eqref{Z_t} as 
\begin{equation}\label{Z_t rewritten}
	d\calZ_{\tau}^u=\calA^*\calZ_{\tau}^ud{\tau}+\nu\beta^g({\tau},\calZ_{\tau}^{u})d\tau+\nu\sigma^g({\tau},\calZ_{\tau}^{u})[R^g({\tau},\calZ_{\tau}^{u},u_{\tau})d\tau+dW_{\tau}],
\end{equation}
where
\begin{align*}
    \beta^g(s,\calZ_s^u)&:=\beta(s,\calZ_s^{u,g}) = \beta(s,\langle g, Z_s^u\rangle) = \beta(s,X(s)),\\
    \sigma^g(s,\calZ_s^u)&:=\sigma(s,\calZ_s^{u,g}) = \sigma(s,\langle g, Z_s^u\rangle) = \sigma(s,X(s)),\\
    R^g(s,\calZ_s^u,u_s)&:=R(s,\calZ_s^{u,g}, u_s) = R(s,\langle g, Z_s^u\rangle, u_s) = R(s,X(s),u_s).
\end{align*}
\noindent We are going to discuss existence and uniqueness results for equation \eqref{dZ} in Section \ref{sec: forward}.
\begin{obs}\label{remark brownian}
        We point out that the term
        \begin{equation}\label{volatility}
            \int_0^\tau \nu \sigma^g(s,\calZ_s)dW_s,
        \end{equation}
         in \eqref{Z_t rewritten} can be regarded in two different ways. 
         On the one hand, it can be seen as the element of $Y^*$:
        \begin{equation*}
            \nu\left(\int_0^\tau\sigma^g(s,\calZ_s)dW_s\right),
        \end{equation*}
        where the integration of $\int_0^\tau\sigma^g(s,\calZ_s)dW_s=\int_0^\tau\sigma(s,X(s))dW_s$ is done on $\R$ and then lifted to $Y^*$ by multiplying it by $\nu$.
        On the other hand, by writing \eqref{volatility} as
        \begin{equation*}
            \int_0^\tau\sigma^g(s,\calZ_s) d(\nu W_s),
        \end{equation*}
        we have that $\nu W_s$ can be considered as a cylindrical Wiener process on 
        $\mathbb{H}^\nu:=\left\{\nu x, \ x\in\R \right\}$, 
        which is a Hilbert space with the scalar product $\langle \cdot, \cdot\rangle_{\mathbb{H}^{\nu}}:=\|\nu\|_{Y^*}\langle \cdot, \cdot\rangle_\R$. In this case we also see that $\mathbb{H}^\nu\subsetneq Y^*$.    
\end{obs}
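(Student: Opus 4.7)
The statement is a reconciliation claim for the stochastic integral $\int_0^\tau \nu\sigma^g(s,\calZ_s)\,dW_s$ in two different formalisms; the plan is to verify each interpretation is meaningful and then check they produce the same $Y^*$-valued random variable. For the first interpretation --- scaling an $\R$-valued Itô integral by $\nu$ --- I would use that the map $L_\nu\colon\R\to Y^*$, $x\mapsto \nu x$, is bounded linear with operator norm $\|\nu\|_{Y^*}$. On elementary integrands the commutation $L_\nu\circ\int\cdots dW = \int L_\nu(\cdots)\,dW$ is immediate from linearity of the Itô sum; one then extends to predictable integrands in $L^2([0,T]\times\Omega;\R)$ by density, using the identity $\E\|\nu\int_0^\tau \sigma^g\,dW\|_{Y^*}^2 = \|\nu\|_{Y^*}^2\,\E\int_0^\tau|\sigma^g|^2\,ds$, so that $\nu\int_0^\tau\sigma^g(s,\calZ_s)\,dW_s$ is a genuine $Y^*$-valued random variable.

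For the second interpretation I would first verify that $\mathbb{H}^\nu=\{\nu x:x\in\R\}$, equipped with the inner product $\langle\nu a,\nu b\rangle_{\mathbb{H}^\nu}:=\|\nu\|_{Y^*}\,ab$, is a one-dimensional real Hilbert space continuously embedded into $Y^*$. The process $\{\nu W_s\}_{s\ge 0}$ is then a continuous centred Gaussian process in $\mathbb{H}^\nu$ with independent increments, and its covariance operator on $\mathbb{H}^\nu$ is computed directly from the scalar covariance of $W$; since $\mathbb{H}^\nu$ is finite-dimensional the distinction between cylindrical and genuine $Q$-Wiener processes collapses, and $\nu W$ qualifies as a Wiener process on $\mathbb{H}^\nu$ in the sense of the Hilbert-valued theory. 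The integral $\int_0^\tau\sigma^g(s,\calZ_s)\,d(\nu W_s)$ is then well defined as a standard $\mathbb{H}^\nu$-valued Itô integral.

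To reconcile the two objects I would evaluate both on a generic simple integrand, where each reduces to the same finite sum $\sum_i\sigma^g(s_i,\calZ_{s_i})\,\nu\,(W_{s_{i+1}}-W_{s_i})\in\mathbb{H}^\nu\subset Y^*$, and then extend by continuity using the respective Itô isometries (the real one scaled by $\|\nu\|_{Y^*}^2$ on one side, and the $\mathbb{H}^\nu$-isometry on the other). The strict inclusion $\mathbb{H}^\nu\subsetneq Y^*$ is automatic because $\mathbb{H}^\nu$ is one-dimensional while $Y^*$ is infinite-dimensional in any nontrivial lift of Definition \ref{liftable}.

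The main obstacle is not conceptual but bookkeeping: one must make sure that the constant in the chosen $\mathbb{H}^\nu$-inner product is consistent with the Hilbert-valued Itô isometry applied in step two, so that the identification $\mathbb{H}^\nu\hookrightarrow Y^*$ with scalar multiplication by $\nu$ is preserved at the level of $L^2$ limits. All the Banach-space pathology that motivates the UMD framework in later sections is absent here, because $\sigma^g$ is scalar-valued and $\nu$ is a deterministic element of $Y^*$; hence no Malliavin/UMD machinery is needed, only the real Itô calculus combined with a bounded linear embedding.
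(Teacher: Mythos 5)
The paper states this remark without any proof, so there is nothing to compare your argument against; your verification is correct and is the natural one --- commute the bounded linear map $x\mapsto\nu x$ with the real It\^o integral on simple integrands and pass to the $L^2$ limit for the first reading, and note that on the one-dimensional space $\mathbb{H}^\nu$ the cylindrical and genuine Wiener notions coincide for the second, with both readings agreeing on simple integrands and hence in the limit. Your closing point about normalization is the only real subtlety: with the paper's inner product $\langle\nu a,\nu b\rangle_{\mathbb{H}^\nu}=\|\nu\|_{Y^*}\,ab$ one gets $\|\nu\|_{\mathbb{H}^\nu}=\|\nu\|_{Y^*}^{1/2}$, so the embedding $\mathbb{H}^\nu\hookrightarrow Y^*$ is continuous but not isometric; this is harmless for identifying the two integrals, since the two norms on the one-dimensional range are equivalent and the $L^2$ limits coincide as elements of $Y^*$.
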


In \cref{sec: forward} we are going to provide sufficient conditions that guarantee the existence of a solution of \eqref{Z_t}-\eqref{dZ}. Due to the nature of the lift and identification \eqref{lift}, this will, in turn, provide sufficient conditions also for the existence of a solution to \eqref{X_t}.

\begin{obs}\label{HP Cuchiero}
	From \cite{CT,CTMulti} we see that we could perform the lift also under weaker hypothesis, by taking a subspace $Z\subset Y$ with their relative duals $Y^*\subset Z^*$ such that:
	\begin{itemize}
		\item $Z$ and $Y$ are Banach spaces $Z\subset Y$ and $Z$ embeds continuously into $Y$.
		\item The semigroup $\calS^*$ with generator $\calA^*$ acts in a strongly continuous way on $Y^*$ and $Z^*$ with respect to the respective norm topologies.
		\item The map $\calZ \longmapsto \calS^*_t\calZ$ is weak-* continuous on $Y^*$ and on $Z^*$ for every $t\geq 0$.
		\item The pre-adjoint operator of $\calA^*$, generates a strongly continuous semigroup on $Z$ with respect to the respective norm topology (but not necessarily on $Y$). 
	\end{itemize}
	In this case every kernel of the form $K(t)=\langle g, \calS_t^*\nu \rangle $ with $\nu \in Z^*$ and $S_t^*\nu\in Y^*$ is liftable. While this setting would allow to work with a wider class of kernels, we would not be able to formulate the HJB equations. This is due to the fact that, when considering a kernel $K(t)=\langle g, \calS_t^*\nu\rangle$ with $\nu\in Z^*$, some of the inner products in the definition of the Hamilton-Jacobi-Bellman equation \eqref{HJB}, would not be well defined. Being the goal of this work a control problem, we restrict ourselves to the case $\nu\in Y^*$, as originally stated.
\end{obs}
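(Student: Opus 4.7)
The remark bundles two assertions that I would justify separately: (a) every kernel of the form $K(t)=\langle g,\calS_t^*\nu\rangle$ with $\nu\in Z^*$ and $\calS_t^*\nu\in Y^*$ for $t>0$ is liftable in the sense of Definition \ref{liftable}, under the four weaker bullets on $(Y,Z,\calS^*,\calA^*)$; and (b) in this enlarged setting, the HJB equation \eqref{HJB} cannot be formulated.

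For (a), the plan is to verify the three bullets of Definition \ref{liftable}. The representation $K(t)=\langle g,\calS_t^*\nu\rangle$ is interpreted as the $Z\times Z^*$ pairing for every $t\ge 0$ (using $g\in Z$ and $\calS_t^*\nu\in Z^*$, which is automatic from strong continuity of $\calS^*$ on $Z^*$), and this is compatible with the $Y\times Y^*$ pairing whenever $\calS_t^*\nu\in Y^*$ by the dual embeddings $Z\hookrightarrow Y$ and $Y^*\hookrightarrow Z^*$ postulated in the first two bullets. The inclusion $\calS_t^*\nu\in Y^*$ for $t>0$ is itself part of the hypothesis. The only nontrivial condition is
\begin{equation*}
    \int_0^t \|\calS_s^*\nu\|_{Y^*}^2\,ds < \infty \quad \text{for all } t>0.
\end{equation*}
Away from zero, strong continuity of $\calS^*$ on $Y^*$ yields local boundedness of $s\mapsto\|\calS_s^*\nu\|_{Y^*}$ on every interval $[\varepsilon,t]$; near zero one needs a quantitative short-time estimate, typically of the form $\|\calS_s^*\nu\|_{Y^*}\le Cs^{-\alpha}$ with $\alpha<1/2$, which is available for the explicit liftable kernels (fractional, gamma) treated in \cite{CT,CTMulti,Antony}. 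Since the remark cites \cite{CT,CTMulti} directly, I would appeal to those estimates rather than re-derive them in the abstract setting.

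For (b), I would trace which pairings in the HJB \eqref{HJB} break down. Computing the generator of the lifted process $\calZ^u$ from \eqref{Z_t rewritten}, the first-order part of the HJB involves $\langle \nabla v(\calZ),\,\calA^*\calZ+\nu\beta^g+\nu\sigma^g R^g\rangle$, and the second-order part involves the rank-one operator $\nu\otimes\nu$ acting on $\nabla^2 v(\calZ)$. When $\nu\in Y^*$ these expressions close: the Gâteaux derivative of $v:Y^*\to\R$ pairs against elements of $Y^*$ via its natural predual, and the trace-type term is then well defined. If $\nu$ lies only in $Z^*\setminus Y^*$, then $\nu\beta^g$ and $\nu\sigma^g$ are elements of $Z^*$ but not of the state space $Y^*$ of $\calZ^u$, so the drift argument of $\nabla v(\calZ)$ is inconsistent, and the second-order term requires pairing $\nu\otimes\nu\in Z^*\otimes Z^*$ against $\nabla^2 v$ which is adapted to $Y^*$. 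The main obstacle in the whole argument is the short-time integrability in (a), which is genuinely quantitative and model-specific and is not implied by the four structural bullets alone; part (b) is, by comparison, a bookkeeping check once the relevant dual spaces are tracked through the generator computation.
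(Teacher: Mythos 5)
Your proposal is essentially the argument the paper intends: the statement is a remark rather than a theorem, and the paper justifies part (a) only by deferring to \cite{CT,CTMulti} and part (b) by the bare observation that some pairings in \eqref{HJB} fail, which you make precise by identifying $\langle \calA^*x+\nu\beta^g(t,x),\nabla f(x)\rangle_{Y^*\times Y^{**}}$ and the trace term built from $\nu\otimes\nu$ as the expressions that lose meaning when $\nu\in Z^*\setminus Y^*$. One caveat worth noting: Definition \ref{liftable} as written demands $\nu\in Y^*$ and a \emph{uniformly} continuous semigroup, so a kernel satisfying only the four weaker bullets is not literally liftable in that sense; the remark has to be read as asserting that the lift construction of \cite{CT,CTMulti} still goes through, which is in effect how you treat it by routing the pairing through $Z\times Z^*$. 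Your flagging of the short-time square-integrability of $\|\calS_s^*\nu\|_{Y^*}$ as the only genuinely quantitative point not implied by the structural hypotheses is accurate and consistent with the paper, which likewise leaves it to the cited references.
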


\noindent In a similar fashion to what we did for \eqref{X_t}, recalling that $X^u(\tau):=\calZ_\tau^{u,g}$, we rewrite the performance functional \eqref{J(u)} so to make its dependence from the lifted process $\calZ^u_\tau$ explicit:
\begin{align}
	J(t,x,u)&=\E\left[\int_t^TF(\tau,\calZ^{u,g}_\tau,u_\tau)d\tau+G(\calZ^{u,g}_T)\right]\nonumber \\
 &:=\E\left[\int_t^TF^g(\tau,\calZ_\tau^u,u_\tau)d\tau+G^g(\calZ_T^u)\right]:=J^g(t,\zeta,u),\label{equivalence J(u)}
\end{align}
where the functions $F:[0,T]\times \R\times \calU\longrightarrow\R$, $G:\R\longrightarrow\R$ are \emph{lifted} to the functions 
\begin{align*}
    F^g&:[0,T]\times Y^*\times\calU\longrightarrow\R,\\
    G^g&:Y^*\longrightarrow\R
\end{align*}
where $Y^*$ is the Banach space associated to the liftable kernel, see Definition \ref{liftable}. The lifted maps $F^g$ and $G^g$ are 
\begin{align*}
    F^g(\cdot,\calZ^u_\tau,\cdot)&:=F(\cdot,\calZ^{u,g}_\tau,\cdot)=F(\cdot,\spam{g,\calZ^u_\tau},\cdot)=F(\cdot, X^u(\tau), \cdot),\\
    G^g(\calZ^u_\tau)&:=G(\calZ^{u,g}_\tau)=G(\spam{g,\calZ^u_\tau})=G( X^u(\tau)).
\end{align*}
The stochastic optimal control problem \eqref{J(u)}-\eqref{defin: control problem} is then lifted to
\begin{equation}\label{optimal tilde J(u)}
	J^g(t,\zeta,\hatu)=\inf_{u\in\mathds A}J^g(t,\zeta,u)=\inf_{u\in\mathds A}\E\left[\int_t^TF^g(\tau,\calZ_\tau^u,u)d\tau+G^g(\calZ_T^u)\right],
\end{equation}
where the process $\calZ^u$ takes values in the Banach space $Y^*$, and where the dynamics for the controlled process are given by \eqref{Z_t}-\eqref{dZ}. Notice that, while the performance functional has not changed, we write $J^g$ instead of $J$ in order to highlight the dependence on $\calZ^u_t$ instead of $X^u(t)$, as underneath there is a passage from finite to infinite dimensions. 
Indeed, this change of notation embodies a crucial change of framework from a finite to an infinite dimensional setting, allowing us to move from functions $\beta$, $\sigma$, $R$, $F$ and $G$ taking values from $\R$ to new functions $\beta^g$, $\sigma^g$, $R^g$, $F^g$ and $G^g$ that now take values from $Y^*$.
This lift allows us to consider a new optimization problem, written on a space which is not the original one. Nonetheless, we have that $J(t,x,u)=J^g(t,\zeta,u)$ for $\t$, $u\in\calU$. Also, being $g$ fixed and only depending on the kernel representation, finding the pair $(\hatu, \calZ^{\hatu})$ that minimizes \eqref{Z_t rewritten}-\eqref{optimal tilde J(u)} is equivalent to finding the pair $(\hatu,X^{\hatu})$ that solves \eqref{J(u)}-\eqref{X_t}.

\subsection{UMD Banach spaces}

In the sequel we use techniques of Malliavin calculus on the space $Y^*$. For this, we assume:
\begin{hypothesis}\label{UMD}
	The space $Y^*$ is a unconditional martingale differences (UMD) Banach space.
\end{hypothesis}
\noindent For convenience we report here below the essentials on UMD Banach spaces.
\begin{defin}
	Let $(M_n)_{n=1}^N$ be a Banach-space valued martingale, the sequence $d_n=M_{n+1}-M_n$ is called the martingale difference sequence associated with $(M_n)_{n=1}^N$. A Banach space $E$ is said to be a $UMD_p$ $(1<p<\infty)$, space if there exists a constant $\beta$ such that for all $E$-valued $L^p$-martingale difference sequences $(d_n)_{n=1}^N$ we have 
	$$\E\left \|\sum_{n=1}^{N}\epsilon_nd_n\right \|^p\leq \beta^p \E\left\|\sum_{n=1}^N d_n\right\|^p,$$
	where $\epsilon_n \in \R$ for all $n$ and $|\epsilon_n|=1.$
	Thanks to \cite{VanNerv} we also know that, if a Banach space $E$ is $UMD_p$ for some $1<p<\infty$, then $E$ is a $UMD_p$ Banach space for all $p\in(1,\infty)$, and we simply call it a \emph{UMD Banach space}. 
\end{defin}
\noindent In the context of stochastic analysis in Banach spaces, martingale difference sequences provide a substitute for orthogonal sequences. In the following parts, we will see that this hypothesis is not very restrictive, as the UMD Banach spaces include all Hilbert spaces, $L^q$ spaces for $q\in (1,\infty)$, reflective Sobolev spaces and many others, thus allowing us to consider a wide class of liftable kernels. In our framework, the process $\calZ^u$ takes values in a UMD Banach space whenever we consider, for example, a shift operator or a quasi-exponential kernel or a kernel that can be expressed as the Laplace transform of a measure with density in $L^q([0,\infty))$, $q\in(1,\infty)$.

Assuming that $Y^*$ is UMD, allows us to define the Malliavin derivative operator $D$ on $L^p(\Omega, Y^*)$. From \cite[Proposition 2.5]{MalliavinBanach}, we know that $D$ is a closed operator and we denote with $\mathbb{D}^{1,p}(Y^*)$ the closure of the domain.

For the results on UMD Banach spaces exploited in the following parts, we refer to \cite{BDG} for the BDG inequality, \cite{Fubini} for the Fubini Theorem and to \cite{MalliavinBanach} for general Malliavin calculus results. In this framework we will also use a Clark-Okone formula for UMD Banach spaces (see \cite{Clark-Ocone}) and the following chain rule linking the Malliavin derivative and the Gâteaux derivative (see \cite{MalliavinBanach})
\begin{prop}\label{ChainRule}
	Let $E$ be a UMD Banach space and let $p\in(1,\infty)$. Suppose that $\varphi\in \calG^1(E,E)$. If $F\in\mathbb{D}^{1,p}(E)$, then $\varphi(F)\in\mathbb{D}^{1,p}(E)$ with 
	\begin{equation*}
	D(\varphi(F))=\nabla\varphi(F)DF.
	\end{equation*}
\end{prop}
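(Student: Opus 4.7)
The plan is to prove the identity first on a dense class of ``smooth'' $E$-valued random variables, where the chain rule reduces to a classical finite-dimensional computation, and then to extend it to all of $\mathbb{D}^{1,p}(E)$ by exploiting closedness of the Malliavin operator $D$. Closedness is the right tool here because once I establish convergence of $\varphi(F_n)\to\varphi(F)$ in $L^p(\Omega;E)$ together with convergence of the candidate derivatives $\nabla\varphi(F_n)\,DF_n\to\nabla\varphi(F)\,DF$ in the relevant $\gamma$-radonifying target space, closedness automatically delivers $\varphi(F)\in\mathbb{D}^{1,p}(E)$ together with the claimed formula.

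\textbf{Chain rule on smooth cylindrical variables.} Let $F = g(W(h_1),\dots,W(h_n))$ with $g\colon\R^n\to E$ smooth (Fréchet) and $h_j\in L^2([0,T])$; such $F$ are dense in $\mathbb{D}^{1,p}(E)$, and one reads off directly $DF=\sum_{j=1}^n \partial_j g(W(h_1),\dots,W(h_n))\,h_j$. By the Gâteaux chain rule stated in the remark after the definition of $\calG^1$, the composition $\varphi\circ g\colon\R^n\to E$ is again Gâteaux differentiable with strongly continuous derivative $\nabla\varphi(g(\cdot))\,\nabla g(\cdot)$. Computing the directional Malliavin derivative of $\varphi(F)$ along a Cameron–Martin perturbation $h$ via $(d/ds)|_{s=0}\,\varphi(F(\omega+s\int_0^\cdot h))$ and invoking the same Gâteaux chain rule, I identify it pointwise with $\nabla\varphi(F)\,DF\cdot h$. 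This yields the identity for every smooth cylindrical $F$, together with the $L^p$ bound needed to control the graph norm of $D$.

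\textbf{Density extension and the main obstacle.} Given $F\in\mathbb{D}^{1,p}(E)$, I pick smooth cylindrical $F_n\to F$ in the graph norm and argue, up to a subsequence, that $F_n\to F$ almost surely. Continuity of $\varphi$ then gives $\varphi(F_n)\to\varphi(F)$ a.s., and strong continuity of $\nabla\varphi$ combined with $DF_n\to DF$ gives, on the event where $F_n$ stays bounded, $\nabla\varphi(F_n)\,DF_n\to\nabla\varphi(F)\,DF$ pointwise. The \emph{main obstacle} is then promoting these a.s.\ convergences to $L^p$-convergences in $L^p(\Omega;E)$ and in $L^p(\Omega;\gamma(L^2([0,T]);E))$ respectively: strong continuity of $\nabla\varphi$ gives only \emph{local} boundedness via the uniform boundedness principle, not an a priori dominating majorant. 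I would handle this by a standard localization — truncating on the events $\{\|F_n\|_E\le N\}$ — followed by a Vitali-type argument (uniform integrability of $\|F_n\|^p$ and of $\|DF_n\|^p$ inherited from $\mathbb{D}^{1,p}$-convergence, plus the local operator bound on $\nabla\varphi$) to upgrade a.s.\ to $L^p$ convergence, letting $N\to\infty$ at the end. With both convergences in hand, closedness of $D$ finishes the proof and yields the chain rule formula.
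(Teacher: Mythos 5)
The paper does not actually prove this proposition: it is imported from \cite{MalliavinBanach} (the citation immediately preceding the statement), so there is no in-paper argument to compare yours against. Your architecture --- establish the identity on smooth cylindrical $E$-valued variables, where it is a finite-dimensional computation, and then pass to general $F\in\mathbb{D}^{1,p}(E)$ by density and closedness of $D$ --- is the standard route and is essentially how such chain rules are obtained in the cited literature, so the overall plan is sound.

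There is, however, a genuine gap exactly at the step you yourself flag as the main obstacle, and it is not repairable under the hypotheses as you have taken them. With $\varphi$ only in $\calG^1(E,E)$, strong continuity of $\nabla\varphi$ yields boundedness of $\|\nabla\varphi(\cdot)\|_{L(E)}$ only on \emph{compact} subsets of $E$ (via uniform boundedness), not on the balls $\{\|F_n\|_E\le N\}$, which are not compact in infinite dimensions; so even your truncated families $\nabla\varphi(F_n)\,DF_n\,\mathds{1}_{\{\|F_n\|_E\le N\}}$ admit no uniformly integrable majorant and the Vitali step does not close. Worse, ``letting $N\to\infty$ at the end'' cannot deliver $\varphi(F)\in\mathbb{D}^{1,p}(E)$, because without a growth bound neither $\varphi(F)\in L^p(\Omega;E)$ nor $\nabla\varphi(F)\,DF\in L^p(\Omega;\gamma(L^2(0,T);E))$ is guaranteed: already for $E=\R$, $\varphi(x)=e^{x^2}$ and $F$ a standard Gaussian one has $F\in\mathbb{D}^{1,p}$ for all $p$ but $\varphi(F)\notin L^1(\Omega)$, so the statement as literally written fails. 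What is true, and what is actually used in this paper, is the chain rule for $\varphi\in\calG^1(E,E)$ with \emph{bounded} derivative (equivalently, Lipschitz $\varphi$); under that hypothesis your argument closes cleanly and without any localization: writing $\|\nabla\varphi(F_n)DF_n-\nabla\varphi(F)DF\|\le \|\nabla\varphi\|_\infty\|DF_n-DF\|+\|(\nabla\varphi(F_n)-\nabla\varphi(F))DF\|$, the first term tends to $0$ in $L^p$ by hypothesis and the second by dominated convergence (strong continuity of $\nabla\varphi$ along an a.s.\ convergent subsequence, with majorant $2\|\nabla\varphi\|_\infty\|DF\|$), after which closedness of $D$ finishes. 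You should either add the boundedness assumption on $\nabla\varphi$ or state explicitly that you prove the result only in that case.
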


\section{The optimal control problem}

We are now interested in solving the lifted optimal control problem \eqref{optimal tilde J(u)}, where the process $\calZ$ follows the controlled dynamics given by
\begin{equation}\label{Z_t controlled}
	\begin{cases}
	d\calZ^u_\tau=&\calA^*\calZ_\tau^ud\tau+\nu\beta^g(\tau,\calZ^u_\tau)d\tau+\nu\sigma^g(\tau,\calZ^u_\tau)R^g(\tau,\calZ^u_\tau,u_\tau)d\tau\\
	&+\nu\sigma^g(\tau,\calZ^u_\tau)dW_\tau,\\
	\calZ^u_t=&\zeta_t.
	\end{cases}
\end{equation}
For our results to hold, we add some Hypothesis on $R$, which directly translates into hypothesis on $R^g$.
\begin{hypothesis}\label{HP5.1 Mas}
	$R:[0,T]\times \R\times \calU\longrightarrow \R$ is measurable and  $\|R(\tau,x,u)\|_{\R}\leq K_R$ for a suitable positive constant $K_R>0$ and every $\tau\in[0,T]$, $x\in \R$, $u\in\calU$.
\end{hypothesis}
\noindent In order to find the optimal value $J(\hat u )$, we associate the following partially coupled system of forward-backward equations
\begin{equation}\label{FBSDE}
\begin{cases}
d\calZ_\tau&=\calA^*\calZ_\tau d\tau+\nu\beta^g(\tau,\calZ_\tau)d\tau+\nu\sigma^g(\tau,\calZ_\tau)dW_\tau,\ \  \tau\in[t,T],\\
\calZ_t&=\zeta_t,\\
dp_\tau&=-\calH(\tau,\calZ_\tau,q_\tau)d\tau+q_\tau \nu dW_\tau, \quad \tau\in[t,T],\\
p_T&=G(\calZ_T),
\end{cases}
\end{equation}
to \eqref{Z_t controlled}. Here above $\calH:[0,T]\times Y^*\times Y^{**}\longrightarrow \R$ is the Hamiltonian function defined as
\begin{equation}\label{Hamilton}
\calH(t,z,\xi)=\inf_{u\in\calU}\left[F^g(t,z,u)+\xi \nu R^g(t,z,u)\right].
\end{equation} 
Notice that the control $u$ only appears in the Hamiltonian functional.
  The solution of the backward equation is denoted by $(p,q)$. 
 We often write $(p_\tau,q_\tau)=(p(\tau,t,\zeta_t),q(\tau,t,\zeta_t))$, $\tau \in [t,T]$ when we want to emphasize the dependence of $p$ and $q$ on the parameter $\zeta_t$ at time $t$. Analogously, when we want to emphasize the dependence of $\calZ$ on the initial value $\zeta_t$ at time $t$, we write $\calZ_\tau = \calZ(\tau,t,\zeta_t)$.
 
 Define now
    \begin{equation}\label{v_t}
        v(t,z):=p(t,t,z),
    \end{equation} 
    with $p$ the solution to the backward SDE in \eqref{FBSDE}.
\noindent In the sequel we show that $J^g(t,\zeta_t, \hat u )$ in \eqref{optimal tilde J(u)} is such that 
\begin{equation}\label{J=v}
J^g(t,\zeta_t, \hat u ) = v(t,\zeta_t)    
\end{equation}
and that the optimal control $\hat u$ can be retrieved explicitly via a verification theorem once $v(t,\zeta_t)$ is known. 
In order to achieve \eqref{J=v} we proceed as follow. First we study the forward equation $\calZ(\tau,t,\zeta_t)$ in Section \ref{sec: forward}, then we study the backward equation $(p(\tau,t,\zeta_t),q(\tau,t,\zeta_t))$ in Section \ref{sec: backward} and there we prove the crucial identification: 
\begin{equation}\label{identification}
q_t=\nabla_z v(t,\calZ_t)\nu\sigma^g(t,\calZ_t),
\end{equation}
(see Proposition \ref{Masiero 4.4}). In Section \ref{sec: HJB} we provide an approach to find $v(t,z)$ through HJB equations and at last, in Section \ref{sec: optimal}, we prove \eqref{J=v} and we provide a characterization of the optimal control $\hatu$.

Notice that, for \eqref{identification} to hold, the backward process $p(\cdot, t,z)$ has to be differentiable with respect to $z$. This can be obtained by showing that $\calZ_t^u$ is differentiable with respect to the initial condition $\zeta_t$, and by assuming the following:
\begin{hypothesis}\label{HP4.1 Mas}
	Let us assume that
	\begin{itemize}
		\item[1)] There exists $L_1>0$ such that 
		\begin{equation*}
		|\calH(t,z,\xi_1)-\calH(t,z,\xi_2)|\leq L_1 \|\xi_1-\xi_2\|_{Y^{**}}
		\end{equation*}
		for every $\t$, $z \in Y^*$ and $\xi_1,\xi_2\in Y^{**}$.
		\item[2)] For all $\t$,  $\E\left[\int_t^T|\calH(s,0,0)|^2ds\right]<\infty$.
		\item[3)]  For every $\t$ we have $\calH(t,\cdot,\cdot)\in\calG^{1,1}(Y^*\times Y^{**})$.
		\item[4)] There exist $L_2>0$ and $m\geq 0$ such that
		\begin{equation*}
		|\nabla_z\calH(t,z,\xi)h|\leq L_2\|h\|_{Y^*}(1+\|z\|_{Y^*})^m(1+\|\xi\|_{Y^{**}})
		\end{equation*}
		for every $\t$, $z, h\in Y^*$ and $\xi\in Y^{**}$.
		\item[5)] $G^g\in\calG^1(Y^*)$ and there exists $L_3>0$ such that, for every $z_1,z_2 \in Y^*$
		\begin{equation*}
			|G^g(z_1)-G^g(z_2)|\leq L_3 \|z_1-z_2\|_{Y^*}.
		\end{equation*}
	\end{itemize}
\end{hypothesis}
\noindent Further details on the continuous dependence on $\zeta_t$ of the forward equation can be found in Section \ref{sec: forward}, while we refer to Section \ref{sec: backward} for details on the differentiability of $p(\cdot, t,z)$  with respect to $z$.

\begin{obs}
    The identification \eqref{identification}, in the case where $\sigma^g$ does not depend on $\calZ$ in \eqref{FBSDE}, can be proven following \cite{Masiero} and dropping the UMD hypothesis on $Y^*$. In our case though, being $\sigma^g$ dependent on $\calZ_t$, we need to exploit some Malliavin calculus techniques on Banach spaces, and thus assume that $Y^*$ is UMD. 
\end{obs}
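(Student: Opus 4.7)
The plan is to combine the nonlinear Feynman-Kac representation $p(\tau, t, \zeta_t) = v(\tau, \calZ(\tau, t, \zeta_t))$ with the Malliavin chain rule of Proposition \ref{ChainRule} and a Clark-Ocone identification of the martingale integrand in the BSDE of \eqref{FBSDE}. The identification \eqref{identification} is then obtained by specializing the resulting expression for the Malliavin derivative of $p_\tau$ at $\tau = t$.

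First I would establish the Gâteaux differentiability of the state map. Writing the forward SDE in \eqref{FBSDE} in mild form
$$\calZ(\tau, t, z) = \calS^*_{\tau - t} z + \int_t^\tau \calS^*_{\tau - s} \nu \beta^g(s, \calZ_s)\, ds + \int_t^\tau \calS^*_{\tau - s} \nu \sigma^g(s, \calZ_s)\, dW_s,$$
and applying Proposition \ref{PROP 2.4 FT} to the induced contraction on a suitable Banach space of adapted $Y^*$-valued processes, I deduce $z \mapsto \calZ(\tau, t, z) \in \calG^1$. Under Hypothesis \ref{HP4.1 Mas}, in particular the Lipschitz and $\calG^{1,1}$-regularity of $\calH$ and the Lipschitz property of $G^g$, the same parameter-dependent contraction principle can be applied to the BSDE for $(p, q)$, producing the Gâteaux differentiability of $v(t, z) = p(t, t, z)$ with respect to $z$. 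By uniqueness of the BSDE and the flow property of $\calZ$, one also obtains the Feynman-Kac-type identity $p(\tau, t, \zeta_t) = v(\tau, \calZ(\tau, t, \zeta_t))$ for $\tau \in [t, T]$.

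The core of the argument is the Malliavin step in the UMD space $Y^*$ (Hypothesis \ref{UMD}). Differentiating the mild equation, using the commutation of $D$ with Bochner and stochastic integrals and the Fubini/BDG theorems cited after Hypothesis \ref{UMD}, yields $\calZ_\tau \in \mathbb{D}^{1,p}(Y^*)$ together with the representation
\begin{equation*}
D_s \calZ_\tau = \nabla_z \calZ(\tau, s, \calZ_s)\, \nu \sigma^g(s, \calZ_s), \qquad s \le \tau,
\end{equation*}
and, in the diagonal limit $s = \tau$, $D_\tau \calZ_\tau = \nu \sigma^g(\tau, \calZ_\tau)$. Applying the chain rule of Proposition \ref{ChainRule} to the composition $p_\tau = v(\tau, \calZ_\tau)$ gives $D_s p_\tau = \nabla_z v(\tau, \calZ_\tau) D_s \calZ_\tau$. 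Setting $s = \tau$ and invoking the UMD Clark-Ocone formula of \cite{Clark-Ocone} to match the $dW$-integrand $q_\tau \nu$ of the BSDE with $D_\tau p_\tau$ yields $q_\tau \nu = \nabla_z v(\tau, \calZ_\tau)\, \nu \sigma^g(\tau, \calZ_\tau)$, which at $\tau = t$ is precisely \eqref{identification}.

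The hardest part will be the Malliavin-calculus machinery in the UMD setting. Unlike the case of \cite{Masiero}, in which $\sigma^g$ does not depend on the state, here one must differentiate an $Y^*$-valued stochastic convolution whose integrand itself depends on $\calZ$; establishing $\calZ \in \mathbb{D}^{1,p}(Y^*)$ and justifying the commutation of $D$ with the Bochner and UMD stochastic integrals is the main technical work. A related subtlety is that Proposition \ref{ChainRule} is formulated for a self-map $\varphi: E \to E$, whereas $v$ is scalar-valued on the $Y^*$-valued process $\calZ_\tau$; applying it therefore requires either an embedding $\R \hookrightarrow Y^*$ or a direct coordinate-wise argument through the duality pairing with $\nu \sigma^g(\tau, \calZ_\tau)$. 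Finally, upgrading the $\calG^{1,1}$-regularity of $\calH$ in Hypothesis \ref{HP4.1 Mas} to the $\calG^1$-regularity of $v$ will demand an independent linearization of the backward equation and a second application of Proposition \ref{PROP 2.4 FT}.
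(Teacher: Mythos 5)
Your proposal follows essentially the same route as the paper: Gâteaux differentiability of the forward flow and of $(p,q)$ via the parameter-dependent contraction principle (Propositions \ref{PROP 3.3 FT} and \ref{Masiero 4.2}), the Malliavin representation $D_s\calZ_\tau=\nabla_z\calZ(\tau,s,\calZ_s)\nu\sigma^g(s,\calZ_s)$ (Proposition \ref{3.8 FT}), and the identification of $q$ through the UMD chain rule and Clark--Ocone formula (Propositions \ref{ChainRule} and \ref{5.6 FT}), culminating in Proposition \ref{Masiero 4.4}. The technical caveats you flag (diagonal evaluation at $s=\tau$, the scalar-valued target of $v$ versus the self-map formulation of the chain rule) are genuine but are resolved in the paper exactly along the lines you indicate, so there is no substantive difference between your argument and the paper's.
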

 
\subsection{On the lifted forward equation}\label{sec: forward}
In this section we study the lifted forward equation $\calZ_t$ in \eqref{FBSDE}. In particular, we prove that it admits a unique Markovian solution and we study its continuous dependence from the initial parameter $\zeta_t$. 
We thus take
\begin{equation}\label{FORWARD}
	\begin{cases}
	d\calZ_\tau&=\calA^*\calZ_\tau d\tau+\nu\beta^g(\tau,\calZ_\tau)d\tau+\nu\sigma^g(\tau,\calZ_\tau)dW_\tau, \quad \tau\in[t,T],\\
	\calZ_t&=\zeta_t,
	\end{cases}
\end{equation}
we recall that $\calA^*$ is the generator of a uniformly continuous semigroup on the Banach space $Y^*.$
We assume the following:
\begin{hypothesis}\label{HP3.1 FT}
	Suppose that
	\begin{itemize}
		\item[i)] $\beta^g:[0,T]\times Y^*\longrightarrow \R$ is continuous and, for all $\t$ $z_1,z_2\in Y^*$, there exists a constant $L_1 > 0$ such 
 that 
 $$|\nu\beta^g(t,z_1)-\nu\beta^g(t,z_2)|\leq L_1\|z_1-z_2\|_{Y^*},$$
		the map $\nu\beta^g:[0,T]\times Y^*\longrightarrow Y^*$ is measurable. Moreover, for all $\t$ and $z \in Y^*$,
		$$|\nu\beta^g(t,z)|^2\leq L_2(1+\|z\|_{Y^*}^2),$$
        for some constant $L_2>0$.
		\item[ii)] $\sigma^g:[0,T]\times Y^*\longrightarrow \R$ is such that, for every $v\in Y^{**}$ the map $\nu\sigma^g  v:[0,T]\times Y^* \longrightarrow \R $ is measurable, $e^{s\calA^*}\nu\sigma^g(t,z)\in L^2(Y^*)$ for every $s>0$, $t\in[0,T]$ and $z\in Y^*$, and
		\begin{equation*}
			\|\nu\sigma^g(t,z)\|^2_{L(Y^*)}\leq L_3(1+\|z\|_{Y^*}^2),
		\end{equation*}
		for some constant $L_3>0$.
		
		Moreover, for $s>0$, $\t$, $z_1,z_2\in Y^*$ there exists a constant $L_4>0$ such that		\begin{equation*}
			\|\nu\sigma^g(t,\zeta_1)-\nu\sigma^g(t,z_2)\|_{L^2(Y^*)}\leq L_4 \|z_1-z_2\|_{Y^*},
		\end{equation*}

		\item[iii)] For every $s>0$, $\t$, $ \nu\beta^g(t,\cdot)\in\calG^1(Y^*,Y^*)$.
	\end{itemize}
\end{hypothesis}
\noindent Our first result is the following:
\begin{prop}\label{PROP 3.3 FT}
Assume Hypothesis \ref{HP3.1 FT} holds. For every $p\in[2,\infty)$, we have that:
\begin{itemize}
	\item[i)] The map $(t,z)\longmapsto \calZ(\cdot,t,z) $ is in $\calG^{0,1}([0,T]\times Y^*; L^p(\Omega; C([0,T];Y^*)))$.
	\item[ii)] For every $h\in Y^*$ the partial directional derivative process $\nabla_z\calZ(\tau,t,z)h$, $\tau\in[0,T]$ solves $\mathbb{P}$-a.s. the equation
	\begin{align*}
		\nabla_z\calZ(\tau,t,z)h &=e^{(\tau-t)\calA^*}h+\int_t^\tau e^{(\tau-s)\calA^*}\nabla_z\nu\beta^g(s,\calZ(s,t,z))\nabla_z\calZ(s,t,z)h \ ds\\
		&+\int_t^\tau \nabla_z\left(e^{(\tau-s)\calA^*}\nu\sigma^g(s,\calZ(s,t,z)) \right)\nabla_z\calZ(s,t,z)h \ dW_s,\quad \tau\in[t,T],\\
		\nabla_z\calZ(\tau,t,z,)h&=h, \quad \tau\in[0,t).
	\end{align*}
	\item[iii)]$\|\nabla_z\calZ(\tau,t,z)h\|_{L^p(\Omega ; C([0,T];Y^*))}\leq c\|h\|_{Y^*}$ for some constant $c$.
\end{itemize}
We also find  that
\begin{itemize}
	\item[iv)] \eqref{FORWARD} admits a unique adapted solution $\calZ\in L^p(\Omega, C([t,T]);Y^*)$.
\end{itemize} Moreover, we have the following estimate 
	\begin{equation}\label{Stima crescita}
		\|\calZ\|_p^p:=\E\left[\sup_{\tau\in[t,T]}\|\calZ_\tau\|_{Y^*}^p\right]\leq C(1+\|\zeta_t\|_{Y^*}^p),
	\end{equation}
	where $C$ is a constant depending only on $p, T, L$, where $L:=\max\{L_1,L_2, L_3, L_4\}$ and $M:=\sup_{\tau\in[t,T]}\|e^{\tau \calA^*}\|_{L^2(Y^*)}$.
\end{prop}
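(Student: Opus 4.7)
The strategy is to rewrite \eqref{FORWARD} in mild form and set it up as a parameter-dependent fixed-point equation in the Banach space $\mathcal{K} := L^p(\Omega; C([t,T]; Y^*))$, then invoke Proposition \ref{PROP 2.4 FT} to transfer the regularity from the coefficients to the solution map. The mild formulation reads
\begin{equation*}
\calZ_\tau = e^{(\tau-t)\calA^*}\zeta_t + \int_t^\tau e^{(\tau-s)\calA^*}\nu\beta^g(s,\calZ_s)\,ds + \int_t^\tau e^{(\tau-s)\calA^*}\nu\sigma^g(s,\calZ_s)\, dW_s,
\end{equation*}
where the stochastic convolution is a $Y^*$-valued stochastic integral, well-defined thanks to Hypothesis \ref{UMD} and the $L^2(Y^*)$-integrability assumption in Hypothesis \ref{HP3.1 FT}(ii). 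Denote the right-hand side by $\Gamma_{t,\zeta_t}(\calZ)$. The Lipschitz bounds of Hypothesis \ref{HP3.1 FT}, H\"older's inequality, and the UMD-valued BDG inequality of \cite{BDG} give, on any subinterval $[t, t+\delta]\subseteq[t,T]$,
\begin{equation*}
\E \sup_{\tau \in [t,t+\delta]}\|\Gamma_{t,\zeta_t}(\calZ^1)_\tau - \Gamma_{t,\zeta_t}(\calZ^2)_\tau\|_{Y^*}^p \le C(\delta^p + \delta^{p/2})\, \E\sup_{\tau \in [t,t+\delta]}\|\calZ^1_\tau - \calZ^2_\tau\|_{Y^*}^p,
\end{equation*}
which is a strict contraction once $\delta$ is small. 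Gluing the corresponding fixed points on a finite partition of $[t,T]$ produces the unique adapted solution required by (iv); applying the same inequalities to $\calZ$ itself together with the linear-growth bounds in Hypothesis \ref{HP3.1 FT}(i),(ii) and Grönwall's lemma then yields the growth estimate \eqref{Stima crescita}.

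For (i) and (ii), I would view the mild equation as the fixed-point problem for a map $F:\mathcal{K}\times Y^*\times[0,T]\to\mathcal{K}$ whose value at $(\calZ,z,t)$ equals the right-hand side of the mild equation with $\zeta_t$ replaced by the generic $z$. The previous step shows that $F$ is a uniform contraction in its first argument; its dependence on $z$ is affine and hence Gâteaux smooth, Hypothesis \ref{HP3.1 FT}(iii) together with the chain rule for Gâteaux derivatives makes $F$ Gâteaux differentiable in $\calZ$, and the uniform continuity of the semigroup $\calS^*$ combined with dominated convergence gives continuity in $t$. Proposition \ref{PROP 2.4 FT} applied to $F$ then delivers the $\calG^{0,1}$ regularity of $(t,z)\mapsto \calZ(\cdot,t,z)$ claimed in (i) and the linearised equation for $\nabla_z \calZ(\tau,t,z)h$ obtained by formally differentiating the fixed-point equation along the direction $h$. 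Item (iii) is then obtained by re-running the BDG-plus-Grönwall argument on that linearised equation, exploiting its linearity in $\nabla_z \calZ(\cdot,t,z)h$ and the uniform operator bounds on the coefficients.

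The main technical obstacle is the $Y^*$-valued stochastic convolution: there is no Itô isometry available, so the $L^p$-contraction estimate must go through the UMD-valued BDG inequality, and one must first verify the stochastic integrability of $s\mapsto e^{(\tau-s)\calA^*}\nu\sigma^g(s,\calZ_s)$ in $Y^*$. The $L^2(Y^*)$-assumption in Hypothesis \ref{HP3.1 FT}(ii) and the uniform operator bound $M = \sup_{\tau\in[t,T]}\|e^{\tau\calA^*}\|_{L^2(Y^*)}$ are tailored precisely to this point; once the stochastic convolution is under control, the contraction, Grönwall, and parameter-dependent fixed-point steps follow the classical Da Prato--Zabczyk template transported to the UMD setting.
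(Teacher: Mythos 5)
Your proposal is correct and follows essentially the same route as the paper: the same mild-formulation fixed point in $L^p(\Omega;C([0,T];Y^*))$, the same drift/stochastic-convolution splitting estimated via the UMD-valued BDG inequality, contraction on small time intervals glued over a partition, Grönwall for \eqref{Stima crescita}, and Proposition \ref{PROP 2.4 FT} (with Lemma \ref{LEMMA 2.3 FT}) for the parameter-dependent differentiability yielding (i)--(iii). No substantive difference to report.
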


\begin{proof}
	The proof is inspired by \cite[Theorem 7.4]{DaPrato_Soluzione}  and \cite[Proposition 3.2]{FT}  The main difference with our work are the spaces at play. Consider the map
	\begin{equation*}
		\Phi(\calZ,t,z)_\tau:L^p(\Omega;C([0,T];Y^*))\times[0,T]\times Y^*\longrightarrow L^p(\Omega;C([0,T];Y^*))
	\end{equation*}
	defined as
	\begin{align*}
		\Phi(\calZ,t,z)_\tau&:=e^{\calA^*(\tau-t)}z+\int_0^\tau\mathds{1}_{[t,T]}(s)e^{\calA^*(\tau-s)}\nu\beta^g(s,\calZ_s)ds\\
		&+\int_0^\tau\mathds{1}_{[t,T]}(s)e^{\calA^*(\tau-s)}\nu\sigma^g(s,\calZ_s)dW_s\\
            &:= S_0(\calZ,t,z)_\tau + S_1(\calZ,t,z)_\tau  + S_2(\calZ,t,z)_\tau  \quad \tau \in [0,T].
	\end{align*}
	We want to show that $\Phi$ is a contraction with respect to the first variable. 	
	We notice that 
	\begin{align*}
	    \|S_1(\calZ,t,z)\|^p & \leq M^p\E\left[\left(\int_0^T\|\nu\beta^g(s,\calZ_s)\|_{Y^*} ds \right)^p\right] \\
	    &\leq T^{p-1}M^p\E\int_0^T\E\left[\int_0^T\|\nu\beta^g(s,\calZ_s)\|_{Y^*}^p ds\right]\\
	    &\leq 2^{p/2-1}T^{p-1}M^pL^p\E\left[\int_0^T(1+\|\calZ_s\|_{Y^*}^p)ds\right]\\
	    &\leq 2^{p/2-1}(TLM)^p(1+\|\calZ\|_p),
	\end{align*}
	and
	\begin{align*}
	    \|S_2(\calZ,t,z)\|^p&\leq \sup_{\tau\in[0,T]}\E\left[\left\|\int_0^\tau e^{(\tau-s)\calA^*}\nu\sigma^g(s,\calZ_s)dW(s)\right\|_{Y^*}^p\right]\\
	    &\leq M^p C_{p/2}LT^{p/2-1}2^{p/2-1}\E\left[\int_0^T(1+\|\calZ_s\|^p_{Y^*})ds\right]\\
	    &\leq M^pC_{p/2}L(2T)^{p/2-1}(T+\|\calZ\|_p),
	\end{align*}
	where we used the linear growth conditions on $\beta^g$ and $\sigma^g$ and the Burkholder-Davis-Gundy inequality for $UMD$ Banach spaces (see \cite{BDG}). We thus have showed that $\Phi(\cdot,t,z)$ is a well defined mapping. Now, taking $\calZ_1$ and $\calZ_2$ arbitrary processes in $Y^*$, then
	\begin{align*}
	    \|\Phi(\calZ_1,t,z)-\Phi(\calZ_2,t,z)\|_p&\leq \|S_1(\calZ_1,t,z)-S_1(\calZ_2,t,z)\|_p+\|S_2(\calZ_1,t,z)-S_2(\calZ_2,t,z)\|_p\\
     &:= I_1+I_2.
	\end{align*}
	With computations similar to the ones above, exploiting the Lipschitz condition on $\beta^g$ and $\sigma^g$ (see Hypothesis \ref{HP3.1 FT}, \emph{i)} and \emph{ii)}), one finds that
	\begin{equation*}
	    I_1^p\leq (TML)^p\|\calZ_1-\calZ_2\|_p^p,
	\end{equation*}
	and 
	\begin{equation*}
	    I_2^p\leq C_{p/2}(ML)^pT^{p/2}\|\calZ_1-\calZ_2\|_p^p.
	\end{equation*}
	Summing up, we have that
	\begin{equation*}
	    \|\Phi(\calZ_1)-\Phi(\calZ_2)\|_p\leq LM(T^p+C_{p/2}T^{p/2})^{1/p}\|\calZ_1-\calZ_2\|_p.
	\end{equation*}
	This means that $\Phi(\calZ, t, z)$ is a contraction only for $\t$ when $T$ satisfies
	\begin{equation}\label{condizione_contrazione}
	    LM(T^p+C_{p/2}T^{p/2})^{1/p}<1.
	\end{equation} 
    Condition \eqref{condizione_contrazione} on $T$ can be easily removed by considering the equation on intervals $[0,\tilde{T}]$, $[\tilde{T},2\tilde{T}]$,..., where $\tilde T$ satisfies \eqref{condizione_contrazione}.
	Thanks to the fixed point theorem we find that \eqref{Z_t rewritten} admits a unique solution. We conclude that \eqref{Stima crescita} holds by applying Gronwall's Lemma with arguments in line with \cite[Theorem 7.4 (iii)]{DaPrato_Soluzione}.
	Notice now that, being $\Phi(\cdot,t,z)$ a contraction uniformly with respect to $\t$, $z\in Y^*$, by Proposition \ref{PROP 2.4 FT} we obtain $ii)$ if
	\begin{equation*}
		\Phi\in\calG^{1,0,1}\left(L^p(\Omega;C([0,T];Y^*))\times[0,T]\times Y^*, L^p(\Omega;C([0,T];Y^*)) \right).
	\end{equation*}
	This is verified by a (slight modification) of Lemma \ref{LEMMA 2.3 FT}. Indeed we notice that $\Phi$ is differentiable in $z$. For more details we refer to \cite{DaPrato_Soluzione,FT}.
\end{proof}
\begin{obs}\label{Markovianity Z}
    We notice that $\calZ$ is Markovian (see e.g. \cite[Theorem 1.157]{GF}).
\end{obs}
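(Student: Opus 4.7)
The plan is to establish Markovianity via the standard flow-property argument for mild solutions of SDEs whose coefficients depend only on the current time and state. Since Proposition \ref{PROP 3.3 FT} gives existence, uniqueness, and the representation $\calZ(\tau,t,z)$ for the solution starting from $z$ at time $t$, this sets up exactly the ingredients needed.

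First, I would establish the \emph{cocycle} (flow) property: for every $t\leq s\leq \tau\leq T$ and initial datum $\zeta_t\in Y^*$,
\begin{equation*}
\calZ(\tau,t,\zeta_t)=\calZ\bigl(\tau,s,\calZ(s,t,\zeta_t)\bigr)\quad \mathbb{P}\text{-a.s.}
\end{equation*}
This follows by writing the mild formulation of \eqref{FORWARD} on $[t,\tau]$, splitting the integrals at $s$, using the semigroup property $e^{(\tau-s)\calA^*}e^{(s-t)\calA^*}=e^{(\tau-t)\calA^*}$, and observing that both sides then satisfy the mild equation on $[s,\tau]$ with the same initial datum $\calZ(s,t,\zeta_t)$; uniqueness from Proposition \ref{PROP 3.3 FT}~iv) yields the identity.

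Next I would exploit the fact that, for fixed $s\in[t,T]$ and $z\in Y^*$, the solution $\calZ(\cdot,s,z)$ is driven on $[s,T]$ only by the Brownian increments $\{W_r-W_s:r\in[s,T]\}$, which are independent of $\mathcal{F}_s$. Consequently, for any bounded measurable $\varphi:Y^*\to\R$, the map
\begin{equation*}
P_{s,\tau}\varphi(z):=\E\bigl[\varphi\bigl(\calZ(\tau,s,z)\bigr)\bigr]
\end{equation*}
is deterministic, and by a standard freezing/independence lemma,
\begin{equation*}
\E\bigl[\varphi\bigl(\calZ(\tau,s,Z)\bigr)\,\big|\,\mathcal{F}_s\bigr]=P_{s,\tau}\varphi(Z)
\end{equation*}
for every $\mathcal{F}_s$-measurable $Y^*$-valued $Z$. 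Applying this with $Z=\calZ(s,t,\zeta_t)$ and combining with the cocycle property gives
\begin{equation*}
\E\bigl[\varphi(\calZ(\tau,t,\zeta_t))\,\big|\,\mathcal{F}_s\bigr]=P_{s,\tau}\varphi\bigl(\calZ(s,t,\zeta_t)\bigr),
\end{equation*}
which is the Markov property.

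The main technical point I expect to require care is the freezing lemma in the Banach-valued setting: one has to approximate $\calZ(s,t,\zeta_t)$ by simple $\mathcal{F}_s$-measurable random variables and use the continuity of $z\mapsto \calZ(\tau,s,z)$ (in $L^p(\Omega;C([s,T];Y^*))$, which follows from Proposition \ref{PROP 3.3 FT}~i)) to pass to the limit, together with the independence of $W_r-W_s$ from $\mathcal{F}_s$. All remaining steps are routine consequences of the mild formulation and uniqueness, which is why the result can be invoked from \cite[Theorem 1.157]{GF}.
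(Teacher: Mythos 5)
Your argument is correct and is essentially the standard proof that the cited result \cite[Theorem 1.157]{GF} encapsulates: the paper offers no proof beyond that citation, and the flow identity $\calZ(\tau,t,z)=\calZ(\tau,s,\calZ(s,t,z))$ you derive from uniqueness is exactly the one the paper itself invokes later (in the proof of Proposition \ref{3.8 FT}), while the freezing/independence step goes through thanks to the continuity of $z\mapsto\calZ(\cdot,s,z)$ from Proposition \ref{PROP 3.3 FT}~i). Nothing further is needed.
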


\begin{co}\label{existence Z controlled}
    Assume Hypothesis \ref{HP5.1 Mas} and \ref{HP3.1 FT} hold. Then \eqref{Z_t controlled} admits a unique solution.
\end{co}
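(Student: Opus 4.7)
The plan is to reduce the controlled forward equation \eqref{Z_t controlled} to the uncontrolled one \eqref{FORWARD} via a Girsanov change of measure. Since $W$ is one-dimensional and $R^g$ is uniformly bounded by Hypothesis \ref{HP5.1 Mas}, Novikov's condition is trivially satisfied, so the extra drift $\nu\sigma^g R^g\,d\tau$ can be absorbed into the stochastic integral by a change of probability measure, and the existence and uniqueness claim for \eqref{Z_t controlled} becomes a direct consequence of Proposition \ref{PROP 3.3 FT}.

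Concretely, I would let $\calZ$ denote the unique adapted solution of \eqref{FORWARD} on $(\Omega, \F, \mathbb{P})$ furnished by Proposition \ref{PROP 3.3 FT}. For the given admissible control $u \in \mathds{A}$, set $\theta_\tau := -R^g(\tau, \calZ_\tau, u_\tau)$; by Hypothesis \ref{HP5.1 Mas} this is a real-valued, predictable and uniformly bounded process with $|\theta_\tau|\leq K_R$. Consequently the Doléans-Dade exponential
\[
\rho_T := \exp\!\left(-\int_t^T \theta_s\,dW_s - \tfrac{1}{2}\int_t^T \theta_s^2\,ds\right)
\]
is a true $\mathbb{P}$-martingale, and setting $d\mathbb{Q}/d\mathbb{P} := \rho_T$, Girsanov's theorem yields that $\tilde W_\tau := W_\tau - \int_t^\tau R^g(s,\calZ_s,u_s)\,ds$ is a $\mathbb{Q}$-Brownian motion with respect to the original filtration. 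Substituting $dW_\tau = d\tilde W_\tau + R^g(\tau,\calZ_\tau,u_\tau)\,d\tau$ into \eqref{FORWARD} then shows that $\calZ$ solves \eqref{Z_t controlled} driven by $\tilde W$ under $\mathbb{Q}$, giving existence. The estimate \eqref{Stima crescita} transports to $\mathbb{Q}$ via Hölder's inequality, since $\rho_T$ has moments of all orders because $\theta$ is bounded.

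For uniqueness I would reverse the transformation: any other adapted solution $\calZ'$ of \eqref{Z_t controlled} driven by $\tilde W$ on $(\Omega,\F,\mathbb{Q})$ can be pulled back via $d\mathbb{P}/d\mathbb{Q} = \rho_T^{-1}$ to an adapted solution of \eqref{FORWARD} on $(\Omega,\F,\mathbb{P})$, which must coincide with $\calZ$ by Proposition \ref{PROP 3.3 FT}, so $\calZ' = \calZ$.

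The main conceptual obstacle to be aware of is the temptation to iterate the contraction-mapping proof of Proposition \ref{PROP 3.3 FT} directly: Hypothesis \ref{HP5.1 Mas} only asserts measurability and boundedness of $R^g$, so the composite drift $\nu\sigma^g R^g$ is not Lipschitz in $z$ and the fixed-point scheme of Proposition \ref{PROP 3.3 FT} cannot be reused as is. The Girsanov route sidesteps this because the added drift is absorbed by a purely scalar change of measure on the one-dimensional noise $W$, while the Banach-valued state $\calZ$ is simply transported along.
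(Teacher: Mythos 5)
Your proposal matches the paper's own proof, which likewise invokes the Girsanov theorem on the strength of the boundedness of $R$ from Hypothesis \ref{HP5.1 Mas} and then proceeds as in the proof of Proposition \ref{PROP 3.3 FT}; you have simply spelled out the Dol\'eans-Dade exponential, the Novikov condition, and the pull-back argument for uniqueness that the paper leaves implicit. No gaps; your closing remark on why the fixed-point scheme cannot be reused directly is a correct and useful observation.
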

\begin{proof}
    Thanks to the boundedness of $R$, one can apply the Girsanov Theorem, see e.g. \cite[Theorem 10.14]{DaPrato_Soluzione}, and proceed like in the proof of Proposition \eqref{PROP 3.3 FT}.
\end{proof}
 
 \subsection{On the backward equation}\label{sec: backward}
 In this section we study the backward equation
 \begin{equation}\label{p_t} 
	p_\tau=G^g(\calZ_T)+\int_\tau^T\calH(s,\calZ_s,q_s)ds-\int_\tau^Tq_s\nu dW_s,
\end{equation}
introduced in \eqref{FBSDE}. We study existence and uniqueness of a solution as well as its continuous dependence from the parameter $\zeta_t$. Later on we will exploit \eqref{p_t} to prove \eqref{identification}, as well as show that the optimal value $J^g(t,z,\hat u)$ for the optimization problem \eqref{Z_t rewritten} - \eqref{optimal tilde J(u)} is achieved for  $J^g(t,\zeta_t, \hat u) = v(t,\zeta_t) = p(t,t,\zeta_t)$.
 
 We observe that the following a priori estimate for the pair process $(p,q)$ holds (see \cite{Masiero} and \cite[Proposition 4.3]{FT} ):
\begin{equation*}
	\E\Bigg[\sup_{\tau\in[t,T]}|p_\tau|^2\Bigg]+\E\Bigg[\int_t^T\|q_\tau\|^2_{Y^{**}}d\tau\Bigg]\leq c \E\Bigg[\int_t^T|\calH(\tau,0,0)|^2d\tau\Bigg]+ c \E\Bigg[|G^g(\calZ_T)|^2\Bigg],
\end{equation*}
where $c$ is a constant depending on $T$ and $L:=\max \{L_1, L_2, L_3\}$, where $L_i$, $i=1,..,3$ are the coefficients in Hypothesis \ref{HP4.1 Mas}.

\begin{prop}\label{Masiero 4.2}
	Assume that Hypotheses \ref{HP4.1 Mas} and \ref{HP3.1 FT} hold true. Then \eqref{p_t}	admits a unique solution $(p,q)\in L^2(\Omega, C[0,T]; Y^*)\times L^2(\Omega, L^2[0,T];L^2(Y^*))$ such that the map $$z\longmapsto (p(\cdot,\cdot,z),q(\cdot,\cdot,z))\text{ \emph{  belongs to  } }\calG^1(L^\eta(\Omega;C([0,T];Y^*)),\mathcal{K}_{cont}([0,T]))$$ for $\eta=\ell(m+1)(m+2)$, where $\mathcal{K}_{cont}([0,T])$ is the space of adapted processes $(p,q)$ taking values in $\R\times Y^{**}$ such that $p$ has continuous paths and 
	\begin{equation*}
		\E\left[\sup_{\tau\in[0,T]}|p_\tau|^2\right]+\E\left[\int_\tau^T\|q_s\|_{Y^{**}}^2ds\right]<\infty.
	\end{equation*}
	
	\noindent Moreover, for every $\ell\geq 2$.
	\begin{equation*}
	\left(\E\left[\sup_{\t}|\nabla_z p(t,z)h|^\ell\right]\right)^{1/\ell}\leq C\|h\|_{Y^*}\left(1+\|z\|_{Y^*}^{(m+1)^2}\right)
	\end{equation*}
\end{prop}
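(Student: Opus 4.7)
The plan is to adapt the classical Fuhrman--Tessitore--Masiero strategy to the present UMD Banach-valued setting, proceeding in three phases: (i) existence, uniqueness and the a priori estimate via a Banach fixed point; (ii) Gâteaux differentiability of $z \mapsto (p,q)$ via the parameter-dependent contraction Proposition \ref{PROP 2.4 FT}; (iii) derivation of the linearized BSDE for $(\nabla_z p, \nabla_z q)$ and extraction of the polynomial bound with exponent $(m+1)^2$.

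For (i), I would consider the map
\[
\Psi(p,q;z)_\tau := G^g(\calZ_T) + \int_\tau^T \calH(s,\calZ_s,q_s)\,ds - \int_\tau^T q_s \nu \,dW_s,
\]
obtaining $q$ via the UMD martingale representation available under Hypothesis \ref{UMD}. Hypothesis \ref{HP4.1 Mas}, 1)--2) together with the Lipschitz bound on $G^g$ in 5), and the control of $\calZ$ from Proposition \ref{PROP 3.3 FT}, ensure that $\Psi$ maps $\mathcal{K}_{cont}([0,T])$ into itself. Endowing $\mathcal{K}_{cont}$ with a norm exponentially weighted in $\tau$ turns $\Psi(\cdot;z)$ into a contraction, yielding uniqueness together with the a priori estimate recalled just before the proposition.

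For (ii), I would apply Proposition \ref{PROP 2.4 FT} with state $(p,q) \in \mathcal{K}_{cont}$ and parameter $z$. The required regularity of $\Psi$ in the triple $((p,q),z)$ follows by composition: $z \mapsto \calZ(\cdot,t,z) \in \calG^{0,1}$ by Proposition \ref{PROP 3.3 FT} i), $G^g \in \calG^1$ by Hypothesis \ref{HP4.1 Mas}, 5), and $\calH \in \calG^{1,1}$ by Hypothesis \ref{HP4.1 Mas}, 3); the chain rule Proposition \ref{ChainRule} combined with an adaptation of Lemma \ref{LEMMA 2.3 FT} to this three-variable context gives $\Psi \in \calG^{1,0,1}$. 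Proposition \ref{PROP 2.4 FT} then delivers the Gâteaux $\calG^1$-dependence of the fixed point on $z$.

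For (iii), differentiating the BSDE in a direction $h \in Y^*$, the pair $(P^h,Q^h):=(\nabla_z p\cdot h,\,\nabla_z q\cdot h)$ satisfies the linear BSDE
\begin{align*}
P^h_\tau &= \nabla G^g(\calZ_T)\nabla_z\calZ_T h + \int_\tau^T\!\!\big[\nabla_z\calH(s,\calZ_s,q_s)\nabla_z\calZ_s h + \nabla_\xi\calH(s,\calZ_s,q_s) Q^h_s\big]\,ds \\
&\quad - \int_\tau^T Q^h_s \nu\, dW_s.
\end{align*}
Since $\|\nabla_\xi\calH\|\le L_1$ by Hypothesis \ref{HP4.1 Mas}, 1), standard linear BSDE $L^\ell$-estimates (using the UMD BDG inequality \cite{BDG}) reduce the control of $\E\sup_\tau|P^h_\tau|^\ell$ to controlling the forcing terms. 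By Hypothesis \ref{HP4.1 Mas}, 4) the drift forcing is dominated by $L_2\|\nabla_z\calZ_s h\|_{Y^*}(1+\|\calZ_s\|_{Y^*})^m(1+\|q_s\|_{Y^{**}})$. \textbf{The main obstacle} is the careful tracking of polynomial powers through Hölder's inequality. Combining Proposition \ref{PROP 3.3 FT} iii) (yielding $\|\nabla_z\calZ h\|_{L^p}\le c\|h\|_{Y^*}$), the moment bound \eqref{Stima crescita} (giving $\|\calZ\|_{L^p}\le C(1+\|z\|_{Y^*})$), and an a priori $L^{\ell(m+2)}$ bound on $q$ of order $(1+\|z\|_{Y^*}^{m+1})$ obtained by bootstrapping the BSDE representation, Hölder with three conjugate exponents produces $(\E\sup_\tau|P^h_\tau|^\ell)^{1/\ell}\le C\|h\|_{Y^*}(1+\|z\|_{Y^*}^{(m+1)^2})$. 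The exponent $(m+1)^2 = m(m+1)+(m+1)$ arises precisely from multiplying the growth order $m$ of $\nabla_z\calH$ by the $(m+1)$ order of the $q$-moments, and the choice $\eta = \ell(m+1)(m+2)$ is dictated by closing up these three Hölder exponents.
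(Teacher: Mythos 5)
Your proposal is correct and follows essentially the same route as the paper, which for this proposition simply defers to \cite[Proposition 4.2]{Masiero}: that reference proves the result exactly by the three-phase scheme you describe (fixed point via martingale representation, Gâteaux dependence on $z$ via the parameter-dependent contraction principle, and the linearized BSDE for the gradient bound), and your adaptation to the UMD setting invokes the right substitutes (the UMD Burkholder--Davis--Gundy inequality and Proposition \ref{ChainRule}). The only soft spot is the exponent bookkeeping in phase (iii), where your own accounting (forcing of order $(1+\|z\|)^m$ times a $q$-moment of order $(1+\|z\|^{m+1})$) would more naturally yield $2m+1$ rather than $(m+1)^2$; since $2m+1\leq (m+1)^2$ this still implies the stated estimate, so it is a cosmetic rather than a substantive discrepancy.
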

\begin{proof}
	See \cite{Masiero} Proposition 4.2.
\end{proof}
\noindent Still aiming to prove \eqref{identification}, we provide yet another crucial result that links the directional derivative of $\calZ$ to its Malliavin derivative.
\begin{prop}\label{3.8 FT}
	Assume that Hypothesis \ref{HP3.1 FT} holds. Then for almost all $s,\tau$ such that
	$t\leq s \leq \tau < T$ we have that
	\begin{equation}\label{FT 3.24}
		D_s\calZ(\tau,t,z)=\nabla_z \calZ(\tau,s,\calZ(s,t,z))\nu\sigma^g(s,\calZ(s,t,z)), \ \mathbb{P}-a.s.
	\end{equation} 
	moreover 
 \begin{equation}\label{FT 3.24 ii}
     D_s\calZ(T,t,z)=\nabla_z \calZ(T,s,\calZ(s,t,z))\nu\sigma^g(s,\calZ(s,t,z)), \ \mathbb{P}-a.s. \text{ for almost all }s.
 \end{equation}
\end{prop}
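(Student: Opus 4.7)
The plan is to show that both $D_s\calZ(\tau,t,z)$ and the right-hand side of \eqref{FT 3.24} solve the same linear stochastic equation on $[s,\tau]$ with the same starting value, and then invoke uniqueness to conclude. Throughout I rely on the UMD hypothesis on $Y^*$, which makes the Malliavin derivative, the Clark--Ocone formula and the chain rule of Proposition \ref{ChainRule} available.

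\textbf{Step 1: Malliavin differentiability of $\calZ$.} First I would argue that $\calZ(\tau,t,z)\in\mathbb{D}^{1,p}(Y^*)$ for every $p\in[2,\infty)$. This is the standard UMD-Banach analogue of the classical argument: under Hypothesis \ref{HP3.1 FT}, the Picard iterates defining the fixed point of $\Phi$ in the proof of Proposition \ref{PROP 3.3 FT} are each in $\mathbb{D}^{1,p}$ and converge in this norm thanks to the Lipschitz bounds on $\nu\beta^g$ and $\nu\sigma^g$ combined with the BDG inequality for UMD spaces, together with closedness of $D$ stated after Hypothesis \ref{UMD}.

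\textbf{Step 2: The equation satisfied by $D_s\calZ(\tau,t,z)$.} Applying $D_s$ to the mild formulation
\begin{align*}
\calZ(\tau,t,z)&=e^{(\tau-t)\calA^*}z+\int_t^\tau e^{(\tau-r)\calA^*}\nu\beta^g(r,\calZ(r,t,z))dr\\
&\quad+\int_t^\tau e^{(\tau-r)\calA^*}\nu\sigma^g(r,\calZ(r,t,z))dW_r,
\end{align*}
and using the UMD version of the commutation rule $D_s\!\int\! f\,dW=f(s)\mathds{1}_{[t,\tau]}(s)+\int D_sf\,dW$ together with the chain rule (Proposition \ref{ChainRule}) applied to the superpositions $\nu\beta^g(r,\calZ)$ and $\nu\sigma^g(r,\calZ)$, and noting that for $s\in[t,\tau]$ the integrands prior to $s$ are $\mathcal F_s$-adapted so their $D_s$ vanishes, I obtain the linear equation
\begin{align*}
D_s\calZ(\tau,t,z)&=e^{(\tau-s)\calA^*}\nu\sigma^g(s,\calZ(s,t,z))\\
&\quad+\int_s^\tau e^{(\tau-r)\calA^*}\nabla_z(\nu\beta^g)(r,\calZ(r,t,z))\,D_s\calZ(r,t,z)\,dr\\
&\quad+\int_s^\tau e^{(\tau-r)\calA^*}\nabla_z(\nu\sigma^g)(r,\calZ(r,t,z))\,D_s\calZ(r,t,z)\,dW_r.
\end{align*}

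\textbf{Step 3: Identification with the variational equation.} Using the flow property $\calZ(r,s,\calZ(s,t,z))=\calZ(r,t,z)$ for $t\leq s\leq r$, which follows from the Markovianity noted in Remark \ref{Markovianity Z} and pathwise uniqueness, Proposition \ref{PROP 3.3 FT}(ii) applied at initial time $s$, initial datum $y=\calZ(s,t,z)$ and direction $h=\nu\sigma^g(s,\calZ(s,t,z))$ shows that
\begin{equation*}
\Psi_\tau:=\nabla_z\calZ(\tau,s,\calZ(s,t,z))\,\nu\sigma^g(s,\calZ(s,t,z)),\qquad \tau\in[s,T),
\end{equation*}
solves exactly the same linear mild equation as in Step 2, with the same starting value $e^{(\tau-s)\calA^*}\nu\sigma^g(s,\calZ(s,t,z))$ at $\tau=s$.

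\textbf{Step 4: Uniqueness and the endpoint.} A contraction argument identical in structure to that in the proof of Proposition \ref{PROP 3.3 FT}, exploiting the uniform bounds $L_1,L_4$ on $\nabla_z(\nu\beta^g)$ and $\nabla_z(\nu\sigma^g)$, gives uniqueness in $L^p(\Omega;C([s,\tau];Y^*))$ of the solution to this linear equation. Hence $D_s\calZ(\tau,t,z)=\Psi_\tau$ for every $\tau<T$, which is \eqref{FT 3.24}. The extension \eqref{FT 3.24 ii} to $\tau=T$ then follows by letting $\tau\uparrow T$ using the path continuity and $L^p$-estimates of Proposition \ref{PROP 3.3 FT}(i)--(iii) together with closedness of $D$. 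The main obstacle is Step 2: the rigorous justification of the Malliavin-derivative commutation with the Banach-valued stochastic integral and the chain rule for $\nabla_z(\nu\sigma^g)$, which is precisely where the UMD assumption and the results of \cite{MalliavinBanach,Clark-Ocone} enter in an essential way; once that is in place, Steps 3--4 reduce to the kind of uniqueness argument already carried out for \eqref{FORWARD}.
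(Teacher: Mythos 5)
Your proposal is correct and follows essentially the same route as the paper: show that $\nabla_z\calZ(\tau,s,\calZ(s,t,z))\nu\sigma^g(s,\calZ(s,t,z))$ and $D_s\calZ(\tau,t,z)$ solve the same linear mild equation on $[s,\tau]$ (using the flow property and Proposition \ref{PROP 3.3 FT}(ii)), conclude by uniqueness, and obtain the endpoint $\tau=T$ by letting $\tau_n\uparrow T$ and using the closedness of $D$. Your Steps 1--2 merely make explicit the Malliavin differentiability and the derivation of the equation for $D_s\calZ$, which the paper delegates to the cited reference.
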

\begin{proof}
	Thanks to Proposition \ref{PROP 3.3 FT}, for every $s\in[0,T]$ and every direction $h\in Y^*$, the directional derivative process $\nabla_z \calZ(\tau,s,z)h$, $\tau\in[s,T)$ solves $\mathbb{P}$-a.s. the equation
	\begin{align*}
		\nabla_z\calZ(\tau,t,z)h &=e^{(\tau-t)\calA^*}h+\int_t^\tau e^{(\tau-s)\calA^*}\nabla_z\nu\beta^g(s,\calZ(s,t,z))\nabla_z\calZ(s,t,z)hds\\
		&+\int_t^\tau \nabla_z\left(e^{(\tau-s)\calA^*}\nu\sigma^g(s,\calZ(s,t,z)) \right)\nabla_z\calZ(s,t,z)hdW_s,\quad \tau\in[t,T],\\
		\nabla_z\calZ(\tau,t,z)h&=h, \quad \tau\in[0,t),
	\end{align*}
	Given $v\in Y^*$ and $t\in[0,s]$, we can replace $z$ by $\calZ(s,t,z)$ and $h$ by $\nu\sigma^g(s,\calZ(s,t,z))v$ in the previous equation, since $\calZ(s,t,z)$ is $\F_s$ measurable. Note now that
	\begin{equation*}
		\calZ(\eta,s,\calZ(s,t,z))=\calZ(\eta,t,z) \quad \mathbb{P}-a.s.,
	\end{equation*}
	for $\eta\in[s,T)$, as a consequence of the uniqueness of the solution of \eqref{FORWARD}. This yields
	\begin{align*}
		\nabla_z&\calZ(\tau,\calZ(s,t,z))\nu\sigma^g(s,\calZ(s,t,z))v=e^{(\tau-s)\calA^*}\nu\sigma^g(s,\calZ(s,t,z))v\\
		&+\int_s^\tau e^{(\tau-\eta)\calA^*}\nabla_z\nu\beta^g(\eta,\calZ(\eta,t,z))\nabla_z\calZ(\eta,s,\calZ(s,t,z))\nu\sigma^g(s,\calZ(s,t,z))vd\eta\\
		&+\int_s^\tau\nabla_z(e^{(\tau-\eta)\calA^*}\nu\sigma^g
		(\eta,\calZ(\eta,t,z)))\nabla_z\calZ(\eta,s,\calZ(s,t,z))\nu\sigma^g(s,\calZ(s,t,z))vdW_\eta,
	\end{align*}
	for $\tau\in[s,T)$, $\mathbb{P}$-a.s. This shows that the process $$\left\{\nabla_z\calZ(\tau,t,\calZ(s,t,z,))\nu\sigma^g(s,\calZ(s,t,z))v \right\}_{t\leq s \leq \tau < T},$$ is a solution of the equation
	\begin{align*}
		Q_{s,\tau}&=e^{(\tau-s)\calA^*}\nu\sigma^g(s,\calZ_s)v+\int_s^\tau e^{(\tau-\eta)\calA^*}\nabla_z\nu\beta^g(\eta,\calZ_\eta)Q_{s,\eta}d\eta\\
		&+\int_s^\tau \nabla_z(e^{(\tau-\eta)\calA^*}\nu\sigma^g(\eta,\calZ_\eta))Q_{s,\eta}dW_\eta,
	\end{align*}
	where $Q_{s,\tau}:=D_s\calZ_\tau v$. The thesis now follows from the uniqueness property, as proved e.g. in \cite{FT} Proposition 3.5.
	To complete the proof of \eqref{FT 3.24 ii}, we take a sequence $\tau_n\uparrow T$ such that \eqref{FT 3.24} holds for every $\tau_n$, and we let $n\rightarrow\infty$ (see \cite{FT}). The result follows from the regularity properties of $D\calZ$ and $\nabla_z\calZ$, as well as the closedness of the operator $D$ on UMD Banach spaces.
\end{proof}

\noindent
In this framework, using the results presented in \cite{Clark-Ocone} and \cite{MalliavinBanach} we find that:
\begin{prop}\label{5.6 FT}
	Assume Hypotheses \ref{HP4.1 Mas} - \ref{HP3.1 FT} Then for a.a. $s,\tau$ such that $t\leq s\leq \tau \leq T$ we have that
	\begin{align}
		D_sp(\tau,t,z)&=\nabla_z p(\tau,s,\calZ(s,t,z))\nu\sigma^g(s,\calZ(s,t,z)) \quad \mathbb{P}-a.s.,\\
		D_sq(\tau,t,z)&=\nabla_z q(\tau,s,\calZ(s,t,z))\nu\sigma^g(s,\calZ(s,t,z)) \quad \mathbb{P}-a.s..
	\end{align}
	Moreover, for a.a. $s\in[t,T]$,
	\begin{equation}\label{q(s,t,z)}
		q(s,t,z)=\nabla_z p(s, s,\calZ(s,t,z))\nu\sigma^g(s,\calZ(s,t,z)) \ \mathbb{P}-a.s.
	\end{equation}
\end{prop}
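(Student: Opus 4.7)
The plan is to transpose the strategy of Proposition \ref{3.8 FT} to the backward equation \eqref{p_t}, combining uniqueness of linear BSDEs for the derivative processes with a Clark-Ocone identification for the trace identity \eqref{q(s,t,z)}.

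First, I would apply the Malliavin operator $D_s$ to both sides of \eqref{p_t}. The chain rule of Proposition \ref{ChainRule} applied to $G^g(\calZ_T)$ and to the driver $\calH(r,\calZ_r,q_r)$, together with the stochastic Fubini theorem and the commutation of $D_s$ with the It\^o integral in the UMD setting (see \cite{MalliavinBanach, Fubini}), would yield that for $t \le s \le \tau \le T$ the pair $(D_s p(\tau,t,z), D_s q(\tau,t,z))$ solves the linear BSDE
\begin{align*}
    D_s p(\tau,t,z) &= \nabla G^g(\calZ_T)\,D_s \calZ(T,t,z)\\
    &\quad + \int_\tau^T \bigl[\nabla_z \calH(r,\calZ_r,q_r)\,D_s\calZ_r + \nabla_\xi \calH(r,\calZ_r,q_r)\,D_s q_r\bigr]\,dr\\
    &\quad - \int_\tau^T D_s q_r \,\nu\, dW_r.
\end{align*}

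Second, I would differentiate the BSDE associated with the initial datum $(s,y)$ with respect to $y$ in a direction $h \in Y^*$. Proposition \ref{Masiero 4.2} provides the G\^ateaux differentiability of $(p(\cdot, s, \cdot), q(\cdot, s, \cdot))$ in the initial variable, and one checks that $(\nabla_z p(\tau, s, y)h, \nabla_z q(\tau, s, y)h)$ solves a BSDE of the same linear structure, with terminal condition $\nabla G^g(\calZ(T,s,y))\nabla_z\calZ(T,s,y)h$ and driver $\nabla_z\calH\cdot\nabla_z\calZ(r,s,y)h + \nabla_\xi\calH\cdot\nabla_z q(r,s,y)h$. Substituting $y := \calZ(s,t,z)$ and $h := \nu\sigma^g(s,\calZ(s,t,z))$ (both $\F_s$-measurable elements of $Y^*$ by Proposition \ref{PROP 3.3 FT}) and exploiting the flow identity $\calZ(\eta,s,\calZ(s,t,z)) = \calZ(\eta,t,z)$, one sees that the driver and terminal data now depend only on $\calZ_r$ and $q_r$ as in the first step. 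Proposition \ref{3.8 FT} further gives $\nabla_z\calZ(r,s,\calZ(s,t,z))\,\nu\sigma^g(s,\calZ(s,t,z)) = D_s\calZ(r,t,z)$ $\mathbb{P}$-a.s., so the two BSDEs of Step 1 and Step 2 have identical terminal data and drivers, and the uniqueness of solutions in $\mathcal{K}_{cont}([0,T])$ (Proposition \ref{Masiero 4.2}) yields the first two identities for $\tau < T$. Extension to $\tau = T$ is obtained via a sequence $\tau_n \uparrow T$ and the closedness of $D$ on $\mathbb{D}^{1,p}(Y^*)$, mirroring the final step of the proof of Proposition \ref{3.8 FT}.

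For the trace identity \eqref{q(s,t,z)}, I would introduce the real-valued martingale $M_\tau := p_\tau + \int_t^\tau \calH(r,\calZ_r,q_r)\,dr$, whose Brownian representation on $[t,T]$ is $M_\tau - M_t = \int_t^\tau q_r\nu\, dW_r$. The Clark-Ocone formula for UMD spaces (\cite{Clark-Ocone}) identifies the integrand as the predictable projection $q_s\nu = \E\bigl[D_s M_T\,\big|\,\F_s\bigr]$, and using the BSDE from the first step this conditional expectation collapses to the $\F_s$-measurable quantity $D_s p(s,t,z)$. Combining with the $\tau = s$ case of the first identity already proved gives \eqref{q(s,t,z)}. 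The main obstacle is the rigorous justification of the Malliavin BSDE in the first step: commuting $D_s$ with both the Bochner and the It\^o integral requires the UMD hypothesis on $Y^*$ (Hypothesis \ref{UMD}) and the $\calG^{1,1}$ regularity of $\calH$ together with the growth bound of Hypothesis \ref{HP4.1 Mas} to guarantee $p, q \in \mathbb{D}^{1,p}$; once this is in place, the uniqueness argument for the linear derivative BSDE follows routinely from the a priori estimate in Proposition \ref{Masiero 4.2}.
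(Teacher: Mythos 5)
Your proposal is correct and follows essentially the same route as the paper, which simply defers to the argument of \cite[Proposition 5.6]{FT}: identify the Malliavin derivative and the directional derivative of $(p,q)$ as solutions of the same linear BSDE via Proposition \ref{3.8 FT}, the UMD chain rule (Proposition \ref{ChainRule}) and uniqueness from Proposition \ref{Masiero 4.2}, then obtain the trace identity \eqref{q(s,t,z)} from the Clark--Ocone formula of \cite{Clark-Ocone}. You have in fact written out the details that the paper only cites.
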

\begin{proof}
	The proof follows the same arguments as \cite[Proposition 5.6]{FT} though the spaces at play are different. Indeed, the main tools are provided in Proposition \ref{3.8 FT}. So, thanks to the extension of Malliavin calculus to UMD Banach spaces, and the chain rule linking Malliavin derivative and Gateaux derivative (see Proposition \ref{ChainRule}), the result is secured. \end{proof}
\noindent Finally, the next result provides the proof of \eqref{identification}.

\begin{prop}\label{Masiero 4.4}
	    Assume that Hypothesis \ref{HP4.1 Mas} and \ref{HP3.1 FT} hold true. Then the function $v(t,z):=p(t,t,z)$ in \eqref{v_t} is continuous and for every $\t$, $v(t,\cdot)$ belongs to $\calG^1(Y^*,\R)$ and there exists $C>0$ such that 
	\begin{equation*}
	|\nabla_z v(t,z)h|\leq C\|h\|_{Y^*}(1+\|z\|_{Y^*}^{(m+1)^2}).
	\end{equation*} 
	Moreover we have that 
    \begin{equation*}
        q(s,t,z)=\nabla_z v(t,\calZ(s,t,z))\nu\sigma^g(s,\calZ(s,t,z)).
    \end{equation*}
\end{prop}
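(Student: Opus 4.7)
The strategy is to deduce both the regularity of $v$ and the identification for $q$ directly from Propositions \ref{Masiero 4.2} and \ref{5.6 FT}, exploiting the fact that $v(t,z)=p(t,t,z)$ is obtained by evaluating the continuous backward process $p(\cdot,t,z)$ at the diagonal $\tau=t$. The key mechanism is that pointwise evaluation at a fixed time is a continuous linear operation on $C([0,T];\R)$, so it preserves the $\calG^1$-regularity in $z$ coming from Proposition \ref{Masiero 4.2}.

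First, I would fix $t\in[0,T]$ and apply Proposition \ref{Masiero 4.2} to obtain that the map $z\longmapsto (p(\cdot,t,z),q(\cdot,t,z))$ belongs to $\calG^1(L^\eta(\Omega;C([0,T];Y^*)),\mathcal{K}_{\mathrm{cont}}([0,T]))$ and, in particular, that $p(\cdot,t,z)$ has continuous paths $\mathbb{P}$-a.s. Composing with the continuous evaluation functional $\pi_t:p(\cdot)\longmapsto p(t)$ and using the chain rule for Gâteaux derivatives (see the remark following the definition of $\calG^1$), one concludes $v(t,\cdot)\in\calG^1(Y^*,\R)$ with $\nabla_z v(t,z)h=\nabla_z p(t,t,z)h$. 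Because $p(t,t,z)$ is $\mathcal{F}_t$-measurable and built from data starting at time $t$ with deterministic initial condition $z$, the value is in fact deterministic; the growth bound
\[|\nabla_z v(t,z)h|\leq C\|h\|_{Y^*}\bigl(1+\|z\|_{Y^*}^{(m+1)^2}\bigr)\]
is then the pointwise reading at $\tau=t$ of the supremum $L^\ell$-estimate in Proposition \ref{Masiero 4.2}.

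For the joint continuity of $v(t,z)$ in $(t,z)$, I would combine the continuous dependence of $\calZ(\cdot,t,z)$ on $(t,z)$ proved in Proposition \ref{PROP 3.3 FT} with the stability of the backward equation \eqref{p_t} under perturbation of the terminal data, the time horizon and the forward input; the Lipschitz and growth assumptions in Hypothesis \ref{HP4.1 Mas} make this stability propagate through the standard fixed-point construction of $(p,q)$, yielding continuity of $p(t,t,z)$ as a map of the pair $(t,z)$.

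Finally, the identification for $q$ is immediate from Proposition \ref{5.6 FT}: equation \eqref{q(s,t,z)} asserts
\[q(s,t,z)=\nabla_z p(s,s,\calZ(s,t,z))\,\nu\sigma^g(s,\calZ(s,t,z)),\quad \mathbb{P}\text{-a.s.}\]
for a.a.\ $s\in[t,T]$. Since $v(s,\zeta)=p(s,s,\zeta)$ by definition, the spatial Gâteaux derivative satisfies $\nabla_z p(s,s,\zeta)=\nabla_z v(s,\zeta)$ for every $\zeta\in Y^*$, which gives the claimed formula. I expect the only delicate point in the whole argument to be the justification that evaluation at the diagonal $\tau=t$ commutes with the spatial Gâteaux derivative; this is secured precisely by the continuous-path structure encoded in $\mathcal{K}_{\mathrm{cont}}([0,T])$ together with the chain rule for $\calG^1$ maps.
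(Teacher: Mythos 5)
Your proposal is correct and follows essentially the same route as the paper, whose proof is simply that the first part is a corollary of Proposition \ref{Masiero 4.2} and the second follows from \eqref{q(s,t,z)}; you merely spell out the diagonal-evaluation and chain-rule details that the paper leaves implicit. Your reading of the identification as $q(s,t,z)=\nabla_z v(s,\calZ(s,t,z))\nu\sigma^g(s,\calZ(s,t,z))$ is the one consistent with \eqref{q(s,t,z)} (the statement's $\nabla_z v(t,\cdot)$ appears to be a typo for $\nabla_z v(s,\cdot)$).
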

\begin{proof}
    The first part is a corollary of Proposition \ref{Masiero 4.2}. The second is derived from \eqref{q(s,t,z)}.
\end{proof}

\subsection{The HJB equation}\label{sec: HJB}

Formally define 
\begin{align*}
	\mathcal L_t[f](x)&:=\frac 12 \text{Trace}(G^g(t,x)G^g(t,x)^* \ \nabla^2f(x))+\langle \calA^*x+\nu\beta^g(t,x),\nabla f(x) \rangle_{Y^*\times Y^{**}}.
\end{align*}
\noindent
We can consider the Hamilton-Jacobi-Bellman equation associated with the control problem \eqref{Z_t rewritten} - \eqref{optimal tilde J(u)}, which is given by
\begin{equation}\label{HJB}
	\begin{cases}
	\frac{\partial w}{\partial t}(t,z)&=-\mathcal{L}_t w(t,z)-\calH(t,z,\nabla w(t,z) \nu\sigma^g(t,z)),\\
	w(T,z)&=G^g(z).
	\end{cases}
\end{equation}
\noindent A solution of this equation provides a way to compute $v(t,z)$ in \eqref{v_t} by PDE methods (see e.g. \cite{cannarsadaprato}). The connection between \eqref{HJB} and \eqref{v_t} is actually detailed in the forthcoming Theorem \ref{HJB Teo} by means of the forward backward system \eqref{FBSDE}. Later on, in Theorem \ref{Masiero 5.7}, we shall see how $w(t,z)$ is connected with the optimal performance, see \eqref{J=v}.
Thus, we are interested in finding mild solutions to the previous equation, which we are going to defined soon. This problem was tackled in \cite{FT} (for a Hilbert space) in the case of a general $\nu\sigma^g$, and in \cite{Masiero} (for a Banach space) in the case of a constant $\nu\sigma^g$. Our result is then extending the on of \cite{Masiero}.

Let $\calZ(\tau,t,\zeta_t)$ be a solution to \eqref{FORWARD}, with $\calA^*$, $\beta^g$ and $\sigma^g$ satisfying Hypotheses \ref{HP4.1 Mas} - \ref{HP3.1 FT}. We recall that this solution is a $Y^*$-valued Markov process (see Remark \ref{Markovianity Z}). We can then define the transition semigroup on continuous and bounded functions $\varphi:Y^*\longrightarrow\R$ as
\begin{equation*}
	P_{t,\tau}[\varphi](z)=\E\left[\varphi(\calZ(\tau,t,z))\right].
\end{equation*}
Moreover, we have that this semigroup is also well defined on continuous functions $\varphi:Y^*\longrightarrow\R$ with polynomial growth with respect to $z$. 

\begin{defin}
	A function $w:[0,T]\times Y^*\longrightarrow \R$ is a mild solution of the Hamilton-Jacobi-Bellman equation \eqref{HJB} if:
	\begin{itemize}
		\item For every $\t$ $w(t,\cdot)\in \calG^1(Y^*)$, $w$ is continuous and $(t,z)\longmapsto w(t,z)$ is measurable from $[0,T]\times Y^*$ with values in $Y^{**}$
		\item For every $\t$, there exists $C>0$ such that $|w(t,z)|\leq C(1+\|z\|^j_{Y^*})$ and $|\nabla_z w(t,z)h|\leq C\|h\|_{Y^*}(1+\|z\|^k_{Y^*})$, with $z,h\in Y^*$ and $j$ and $k$ positive integers.
		\item The following equality holds. 
		\begin{equation*}\label{6.2 MASIERO}
	w(t,z)=P_{t,T}[G^g](z)+\int_t^TP_{t,\tau}[\calH(\tau,\cdot,\nabla w (\tau,\cdot)\nu\sigma^g(t,\cdot))](z)d\tau, \quad \t, z\in Y^*.
\end{equation*}
	\end{itemize}	
\end{defin}
\noindent In order to prove that there exists a unique solution of \eqref{HJB} we need once again the forward-backward system \eqref{FBSDE}:
\begin{equation*}
	\begin{cases}
	d\calZ_\tau&=\calA^*\calZ_\tau d\tau+ \nu\beta^g(\tau,\calZ_\tau)+\nu\sigma^g(\tau,\calZ_\tau)dW_\tau,\qquad \tau\in[t,T]\\
	\calZ_t&=\zeta_t\\
	dp_t&=-\calH(\tau,\calZ_\tau,q_\tau)d\tau+q_\tau \nu dW_\tau,\qquad \tau\in[t,T]\\
	p_T&=G^g(\calZ_T)
	\end{cases}
\end{equation*}

\begin{teo}\label{HJB Teo}
	Assume that $G^g$ and $\calH$ satisfy Hypothesis \ref{HP4.1 Mas} and that Hypothesis \ref{HP3.1 FT} hold true. Then there exists a unique mild solution of the Hamilton-Jacobi-Bellman equation \eqref{HJB} given by
	\begin{equation}
		w(t,\zeta_t)=v(t,\zeta_t)
	\end{equation}
	where $(\calZ,p,q)$ is the solution of  \eqref{FBSDE} and $v(t,z) = p(t,t,z)$, see \eqref{v_t}.
\end{teo}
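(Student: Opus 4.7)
The plan is to identify the mild solution of \eqref{HJB} with the value function $v(t,z)=p(t,t,z)$ coming from the forward-backward system \eqref{FBSDE}. Existence then amounts to a Markov-property computation on the BSDE, while uniqueness is obtained by showing that any mild solution, when evaluated along $\calZ$, solves the same BSDE as $(p,q)$, so that uniqueness in Proposition \ref{Masiero 4.2} forces agreement.

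For \textbf{existence} I set $w(t,z):=v(t,z)$. The regularity requirements in the definition of mild solution ($w(t,\cdot)\in\calG^1(Y^*)$, continuity in $(t,z)$, and polynomial growth of $w$ and $\nabla_z w$) are provided by Proposition \ref{Masiero 4.4} together with Proposition \ref{Masiero 4.2}. To verify the mild identity I evaluate \eqref{p_t} at $\tau=t$ and take expectation, so that the stochastic integral vanishes and, since $\calZ_t=z$ is deterministic,
\[
v(t,z)=\E\big[G^g(\calZ(T,t,z))\big]+\E\int_t^T\calH\big(s,\calZ(s,t,z),q(s,t,z)\big)\,ds.
\]
Substituting the identification $q(s,t,z)=\nabla_z v(s,\calZ(s,t,z))\,\nu\sigma^g(s,\calZ(s,t,z))$ from Proposition \ref{Masiero 4.4} and using Fubini (legitimate by the $L^p$ bounds of Proposition \ref{PROP 3.3 FT} combined with the Lipschitz/growth conditions of Hypothesis \ref{HP4.1 Mas}), the two expectations become $P_{t,T}[G^g](z)$ and $\int_t^T P_{t,s}[\calH(s,\cdot,\nabla w(s,\cdot)\nu\sigma^g(s,\cdot))](z)\,ds$, respectively, which is exactly the mild formula.

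For \textbf{uniqueness}, let $w$ be any mild solution. I apply the mild formula at the random point $\calZ_\tau$ and exploit the Markov property of $\calZ$ (Remark \ref{Markovianity Z}) to rewrite each semigroup evaluation $P_{\tau,s}[\varphi](\calZ_\tau)$ as the conditional expectation $\E[\varphi(\calZ_s)\mid\F_\tau]$. Setting $\Psi:=G^g(\calZ_T)+\int_t^T\calH(s,\calZ_s,\nabla_z w(s,\calZ_s)\nu\sigma^g(s,\calZ_s))\,ds$, this yields
\[
w(\tau,\calZ_\tau)+\int_t^\tau\calH\big(s,\calZ_s,\nabla_z w(s,\calZ_s)\nu\sigma^g(s,\calZ_s)\big)\,ds=\E[\Psi\mid\F_\tau].
\]
The right-hand side is a martingale, and the martingale representation in the UMD setting produces an integrand $U$ such that $(w(\cdot,\calZ_\cdot),U)$ solves a BSDE of the same form as \eqref{p_t} with terminal value $G^g(\calZ_T)$. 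The uniqueness part of Proposition \ref{Masiero 4.2} then forces $w(\tau,\calZ_\tau)=p_\tau$, and specialising $\tau=t$ with $\calZ_t=z$ gives $w(t,z)=p(t,t,z)=v(t,z)$.

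The step I expect to be the main obstacle is the identification in the uniqueness argument: one must match the martingale-representation integrand $U$ with $q_\cdot\nu$ in \eqref{p_t} and verify that the resulting BSDE driver reads $\calH(\cdot,\calZ_\cdot,\nabla_z w(\cdot,\calZ_\cdot)\nu\sigma^g(\cdot,\calZ_\cdot))$. This is precisely where the UMD hypothesis on $Y^*$ enters: combining the Clark--Ocone formula with the chain rule Proposition \ref{ChainRule} and the Malliavin--Gâteaux identity Proposition \ref{3.8 FT} yields $U_s=\nabla_z w(s,\calZ_s)\,\nu\sigma^g(s,\calZ_s)$, so the BSDE for $(w(\cdot,\calZ_\cdot),U)$ has driver $\calH(\cdot,\calZ_\cdot,\nabla_z w(\cdot,\calZ_\cdot)\nu\sigma^g(\cdot,\calZ_\cdot))$ and can be compared with \eqref{p_t}. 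Checking that this candidate pair lies in the solution class of Proposition \ref{Masiero 4.2} is then a routine combination of the polynomial growth of $\nabla_z w$ from the mild-solution definition with the $L^p$ estimates of Proposition \ref{PROP 3.3 FT}.
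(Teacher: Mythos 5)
Your argument is correct and is essentially the same one the paper invokes: the paper's proof of Theorem \ref{HJB Teo} simply defers to \cite[Theorem 6.2]{Masiero} and \cite[Theorem 6.2]{FT}, and your existence step (evaluating the BSDE at $\tau=t$, inserting the identification of Proposition \ref{Masiero 4.4}, and reading off the transition semigroup) together with your uniqueness step (Markov property, martingale representation, and identification of the integrand $U_s$ with $q_s\nu$ via Clark--Ocone, the chain rule of Proposition \ref{ChainRule} and Proposition \ref{3.8 FT}, then uniqueness from Proposition \ref{Masiero 4.2}) is precisely the argument of those cited theorems adapted to the UMD setting. You also correctly flag where the real technical work lies, namely matching the martingale-representation integrand with $\nabla_z w(s,\calZ_s)\nu\sigma^g(s,\calZ_s)$.
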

\begin{proof}
	The proof is based on arguments similar to \cite[Theorem 6.2]{Masiero} and \cite[Theorem 6.2]{FT}, though adapted to the current framework. Note that the main difference with \cite{FT} is the nature of the spaces considered.
\end{proof}

\subsection{Solving the optimal control problem}\label{sec: optimal}
As we have proven the identification \eqref{identification} to be true (see Proposition \ref{Masiero 4.4}), we can finally move to the study of the optimal control problem \eqref{Z_t rewritten} - \eqref{optimal tilde J(u)}. As anticipated, we want to show that the optimal value 
$$\inf_{u\in\mathds A}J^g(t,\zeta_t, u) = J^g(t,\zeta_t, \hat u) = v(t,\zeta_t),$$
where we have defined $v(t,\zeta_t) = p(t,t,\zeta_t)$ in \eqref{v_t}, $(p,q)$ solve the backward stochastic differential equation \eqref{FBSDE} and a solution of $v(t,\zeta_t)$ can be obtained through the HJB equation \eqref{HJB} (see Theorem \ref{HJB Teo}).
We define the, possibly empty, set 
\begin{equation}\label{Gamma}
\Gamma(\tau,z,\xi)=\left\{u\in\calU: F^g(\tau,z,u)+\xi \nu R^g(\tau,z,u)=\calH(\tau,z,\xi) \right\},
\end{equation}
where $\tau\in[t, T]$, $z\in Y^*$, $\xi \in Y^{**}$.

\begin{hypothesis}\label{Gamma non vuoto}
    We notice that, intuitively, $\Gamma(t,z,\xi)$ represents the set of cotrols that allow us to obtain the minimum in the Hamiltonian \eqref{Hamilton}. We will thus assume that for all $\tau\in[t, T]$, $z\in Y^*$, $\xi \in Y^{**}$, $\Gamma(\tau, z,\xi) \neq \emptyset$
\end{hypothesis}

\begin{obs}\label{Gamma0}
    Thanks to the Filippov theorem (see \cite{Filippov}), being $\Gamma(\tau, z,\xi)$ non empty for all $\tau \in [t,T]$, $z\in Y$, $\xi \in Y^{**}$, there exists a Borel measurable map $\Gamma_0:[0,T]\times Y^* \times Y^{**}\longrightarrow \calU$ such that, for $\t$, $z\in Y^* $ and $\xi\in Y^{**}$, $\Gamma_0(t,z,\xi)\in\Gamma(t,z,\xi)$. 
\end{obs}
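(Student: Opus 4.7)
The plan is to apply Filippov's measurable selection theorem (as cited in \cite{Filippov}) to the set-valued map $(\tau,z,\xi)\mapsto \Gamma(\tau,z,\xi)$ with target in the Polish space $\calU\subset\R$. The theorem requires three ingredients: (a) nonemptiness of every $\Gamma(\tau,z,\xi)$, which is exactly Hypothesis \ref{Gamma non vuoto}; (b) Polishness of the target, which holds since $\calU$ is a closed convex subset of $\R$; and (c) Borel measurability of the graph of $\Gamma$ inside $[0,T]\times Y^*\times Y^{**}\times\calU$. The work therefore reduces to verifying (c).

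To verify (c), I would introduce the scalar auxiliary map
\[
\Psi(\tau,z,\xi,u) := F^g(\tau,z,u) + \xi\nu R^g(\tau,z,u) - \calH(\tau,z,\xi),
\]
so that $\mathrm{graph}(\Gamma)=\Psi^{-1}(\{0\})$ and it suffices to check that $\Psi$ is jointly Borel. The lifted data factor as $F^g(\tau,z,u)=F(\tau,\spam{g,z},u)$ and $R^g(\tau,z,u)=R(\tau,\spam{g,z},u)$ through the continuous linear functional $z\mapsto\spam{g,z}$; hence $F^g$ and $R^g$ are jointly Borel in $(\tau,z,u)$, and continuity of the duality pairing between $Y^*$ and $Y^{**}$ yields joint Borel measurability of $(\tau,z,\xi,u)\mapsto \xi\nu R^g(\tau,z,u)$. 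For the remaining term $\calH$, I would exploit continuity of $u\mapsto F(\tau,x,u)$ and $u\mapsto R(\tau,x,u)$ together with separability of $\calU$ to replace the infimum in \eqref{Hamilton} by a countable infimum along a dense subset of $\calU$; this expresses $\calH$ as a countable infimum of jointly Borel functions, and hence $\calH$ is Borel measurable.

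Combining these observations, $\Psi$ is jointly Borel, so $\mathrm{graph}(\Gamma)$ is Borel, and the existence of a Borel measurable selector $\Gamma_0$ follows at once from Filippov's theorem. The only delicate point in this program is the measurability of $\calH$, which rests on the continuity of $F$ and $R$ in the control argument $u$; this continuity is implicit in the formulation of the Hamiltonian \eqref{Hamilton} and available under the assumptions already imposed in the preceding sections, so the remark holds as stated.
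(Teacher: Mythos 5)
The paper offers no argument for this remark beyond the bare citation of the Filippov theorem, so your proposal is essentially a fleshing-out of the intended route rather than a different one: identify the selection problem, check nonemptiness (Hypothesis \ref{Gamma non vuoto}), Polishness of $\calU$, and measurability of the multifunction. Your reduction of the last point to the joint Borel measurability of $\Psi(\tau,z,\xi,u)=F^g+\xi\nu R^g-\calH$, with $\calH$ handled as a countable infimum over a dense subset of $\calU$ using continuity in $u$, is the right bookkeeping and is consistent with how the lifted data $F^g$, $R^g$ factor through $z\mapsto\spam{g,z}$.

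There is, however, one step you should repair: Borel measurability of $\mathrm{graph}(\Gamma)$ alone does \emph{not} yield a Borel measurable selector. The Jankov--von Neumann theorem applied to a Borel (indeed analytic) graph only produces a selection measurable with respect to the $\sigma$-algebra generated by analytic sets, i.e.\ universally measurable, not Borel. To get a genuinely Borel $\Gamma_0$ you should use the additional structure you have already assembled but did not invoke at the decisive moment: since $\Psi(\tau,z,\xi,\cdot)$ is continuous and $\calU$ is closed in $\R$, each section $\Gamma(\tau,z,\xi)=\Psi(\tau,z,\xi,\cdot)^{-1}(\{0\})$ is a closed, hence $\sigma$-compact, subset of $\calU$. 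One then concludes either by the Arsenin--Kunugui uniformization theorem (Borel graph with $\sigma$-compact sections admits a Borel selection), or more directly by the Kuratowski--Ryll-Nardzewski theorem after checking that $\{(\tau,z,\xi):\Gamma(\tau,z,\xi)\cap O\neq\emptyset\}$ is Borel for every open $O\subset\calU$ --- which follows from the same countable-dense-subset argument you use for $\calH$. This is also what the classical Filippov implicit function lemma actually rests on (a Carath\'eodory map together with ($\sigma$-)compactness of the control set), so the conclusion of the remark stands; only the justification of the final implication needs to be restated.
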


\begin{prop}
	Assume that Hypothesis \ref{HP5.1 Mas} - \ref{HP4.1 Mas} and \ref{HP3.1 FT} and hold true, and let $v$ be defined in \eqref{v_t} and $u$ in $\mathds A$. Then for all $\t$ and $z \in Y^*$, we have that
	\begin{equation*}
		J^g(t,z,u)\geq v(t,z).
	\end{equation*}
\end{prop}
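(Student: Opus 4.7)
The idea is to apply Girsanov's theorem to absorb the control-dependent drift of $\calZ^u$ into a new Brownian motion, thereby reducing the BSDE for the controlled problem to the BSDE \eqref{FBSDE} whose value at time $t$ equals exactly $v(t,z)$, and to exploit the defining minimality of the Hamiltonian \eqref{Hamilton} to obtain the one-sided bound.

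Fix $t\in[0,T]$, $z\in Y^*$, and $u\in\mathds{A}$, and let $\calZ^u$ be the unique solution of \eqref{Z_t controlled} with $\calZ^u_t=z$ provided by Corollary \ref{existence Z controlled}. Since $R^g$ is bounded by Hypothesis \ref{HP5.1 Mas}, Novikov's condition holds trivially and
\begin{equation*}
\rho^u_T := \exp\Bigl(-\int_t^T R^g(s,\calZ^u_s,u_s)\,dW_s - \tfrac{1}{2}\int_t^T |R^g(s,\calZ^u_s,u_s)|^2\,ds\Bigr)
\end{equation*}
is a $\mathbb{P}$-martingale density, yielding an equivalent measure $d\mathbb{Q}^u := \rho^u_T\,d\mathbb{P}$ under which $\tilde{W}^u_\tau := W_\tau+\int_t^\tau R^g(s,\calZ^u_s,u_s)\,ds$ is a Brownian motion. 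A direct substitution in \eqref{Z_t controlled} shows that $\calZ^u$ solves under $\mathbb{Q}^u$ the \emph{uncontrolled} forward equation \eqref{FORWARD} driven by $\tilde{W}^u$.

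Posing now, under $\mathbb{Q}^u$ and driven by $\tilde{W}^u$, the BSDE
\begin{equation*}
\tilde{p}_\tau = G^g(\calZ^u_T) + \int_\tau^T \calH(s,\calZ^u_s,\tilde{q}_s)\,ds - \int_\tau^T \tilde{q}_s\,\nu\,d\tilde{W}^u_s, \qquad \tau\in[t,T],
\end{equation*}
Proposition \ref{Masiero 4.2} delivers a unique solution. Because $(\tilde{W}^u,\calZ^u)$ under $\mathbb{Q}^u$ has the same joint law as $(W,\calZ)$ under $\mathbb{P}$ in \eqref{FBSDE}, uniqueness combined with the representation $v(t,z)=p(t,t,z)$ from \eqref{v_t} yields $\tilde{p}_t = v(t,z)$ $\mathbb{P}$-a.s. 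Substituting $d\tilde{W}^u_\tau = dW_\tau + R^g(\tau,\calZ^u_\tau,u_\tau)\,d\tau$ back into this BSDE and invoking the defining pointwise inequality $\calH(s,\calZ^u_s,\tilde{q}_s) \leq F^g(s,\calZ^u_s,u_s) + \tilde{q}_s\,\nu\,R^g(s,\calZ^u_s,u_s)$ from \eqref{Hamilton} gives
\begin{equation*}
v(t,z) \leq G^g(\calZ^u_T) + \int_t^T F^g(s,\calZ^u_s,u_s)\,ds - \int_t^T \tilde{q}_s\,\nu\,dW_s,
\end{equation*}
and taking $\mathbb{P}$-expectation on both sides yields $v(t,z) \leq J^g(t,z,u)$.

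The main technical obstacle is ensuring that $\int_t^T \tilde{q}_s\,\nu\,dW_s$ is a genuine $\mathbb{P}$-martingale: the estimates from Proposition \ref{Masiero 4.2} only yield $\mathbb{Q}^u$-integrability a priori, so one must transfer integrability back to $\mathbb{P}$. Combining the explicit representation $\tilde{q}_s = \nabla_z v(s,\calZ^u_s)\,\nu\,\sigma^g(s,\calZ^u_s)$ from Proposition \ref{Masiero 4.4} with the polynomial bound on $\nabla_z v$ in the same statement, the linear growth of $\sigma^g$ in Hypothesis \ref{HP3.1 FT}, the $L^p$-estimates of the forward process in Corollary \ref{existence Z controlled}, and the fact that $\rho^u_T$ has finite moments of every order under $\mathbb{P}$ (since $R^g$ is bounded), a Hölder inequality delivers the required $L^2(\mathbb{P}\times[t,T])$-bound on $\tilde{q}$ and closes the argument.
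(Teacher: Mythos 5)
Your proposal is correct and follows essentially the same route as the paper's proof: a Girsanov change of measure to absorb the control-dependent drift, identification of the value of the transformed BSDE at time $t$ with $v(t,z)$ via uniqueness in law, and the pointwise infimum inequality defining $\calH$ in \eqref{Hamilton} after reverting to the original Brownian motion and taking $\mathbb{P}$-expectation. Your closing discussion of why $\int_t^T \tilde{q}_s\,\nu\,dW_s$ is a genuine $\mathbb{P}$-martingale is in fact more explicit than the paper, which delegates this point to a citation of \cite[Proposition 5.5]{Masiero}.
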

\begin{proof}
%	The proof of this result follows the one in \cite[Proposition 5.5]{Masiero}. We report it here to highlight the main differences with our work. 	
	Let $u$ in $\mathds A$ and take $\calZ_\tau^u$ to be a solution of \eqref{Z_t controlled}  corresponding to the control $u$ (for the existence of such a solution see Corollary \ref{existence Z controlled}). Define
	\begin{equation*}
		W_\tau^u =W_\tau +\int_{t\vee \tau}^\tau R^g(s,\calZ_s^u,u_s)ds, \quad \tau\in[0,T].
	\end{equation*}
	We notice that $\calZ_\tau^u$ solves the equation 
	\begin{equation*}
			\begin{cases}
			d\calZ^u_\tau=&\calA^*\calZ_\tau^ud\tau+\nu\beta^g(\tau,\calZ^u_\tau)d\tau+\nu\sigma^g(\tau,\calZ^u_\tau)dW^u_\tau,\\
			\calZ_t=&\zeta_t,
		\end{cases}
	\end{equation*}
	and, being $R$ and thus $R^g$ bounded, we can find a probability $\mathbb{P}^u$ equivalent to $\mathbb P$ such that $W^u$ is a Wiener process on $\R$ and thus $\nu W^u$ is a cylindrical Wiener process with values in $Y^*$ (see \cite[Theorem 7.2 (iii)]{DaPrato_Soluzione}).
	We consider the backward equation  w.r.t. $\mathbb{P}^u$ for the unknowns $(p_\tau^u,q_\tau^u)$, $\tau\in[t,T]$ given by
	\begin{equation}\label{Masiero 5.6}
		p_\tau^u+\int_\tau^T q_s^u \nu dW_s^u = G^g(\calZ_T^u)+\int_\tau^T\calH(s,\calZ_s^u,q_s^u)ds.
	\end{equation}
	By taking $(p^u,q^u)$ at $\tau=t$ in \eqref{Masiero 5.6}, we get that $p(t,t,\zeta_t)$ depends only on $t,\zeta_t,\beta^g,\sigma^g, G^g$ and $\calH$. With the same approach as in \cite[Proposition 5.5]{Masiero}, one obtains immediately that $\int_\tau^T q_s^u\nu dW^u_s$ is actually a $\mathbb{P}^u$-martingale.

	By recalling that $v(t,z) := p(t,t,z)$ (see \eqref{v_t}) and taking expectation with respect to the original probability $\mathbb{P}$, we obtain that
	\begin{equation*}
		v(t,\zeta_t)=\E[G^g(\calZ_T^u)]+\E\left[\int_t^T\calH(s,\calZ_s^u,q_s^u)-q_s^u \nu R(s,\calZ_s^u,u_s)  ds\right].
	\end{equation*}
	Adding and subtracting $\E\left[\int_t^T F^g(s,\calZ_s^u,u_s)ds\right]$, we arrive at
	\begin{equation}\label{Masiero 5.8}
		v(t,\zeta_t)=J^g(t,\zeta_t,u)+\E\left[\int_t^T\calH(s,\calZ_s^u,q_s^u)-q_s^u\nu R^g(s,\calZ_s^u,u_s)-F^g(s,\calZ_s^u,u_s)ds\right].
	\end{equation}
	Noticing that
	\begin{equation*}
		\calH(s,\calZ_s^u,q_s^u)-q_s^u\nu R^g(s,\calZ_s^u,u_s)-F^g(s,\calZ_s^u,u_s)\leq 0,
	\end{equation*}
 by definition of $\calH$ in \eqref{Hamilton}, we conclude.
\end{proof}
\begin{co}\label{Corollary 5.6 Masiero}
	Let $\t$ and $\zeta_t \in Y^*$. If $J^g(t,\zeta_t,u^*)=v(t,\zeta_t)$ then $u^*$ is optimal for the control problem starting from $\zeta_t$ at time $t$. Assume Hypothesis \ref{Gamma non vuoto} holds and take $\Gamma_0(\tau, z ,\xi)$ be the Borel measurable map defined in Remark \ref{Gamma0}. Then, an admissible control satisfying 
	\begin{equation*}
		\hat u_\tau=\Gamma_0(\tau, \calZ_\tau^{\hat u},q_\tau^{\hat u }), \quad \mathbb{P}-a.s. \text{ for a.e. }\tau\in[t,T]
	\end{equation*}
	is optimal and $J^g(t,z,\hatu)=v(t,z)$.
\end{co}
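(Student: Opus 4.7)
The plan is to read off both claims directly from the previous proposition together with the identity \eqref{Masiero 5.8}, with no new a priori estimates needed.

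For the first assertion, I would simply combine the hypothesis $J^g(t,\zeta_t,u^*)=v(t,\zeta_t)$ with the bound $J^g(t,\zeta_t,u)\geq v(t,\zeta_t)$ proved in the preceding proposition, valid for every $u\in\mathds A$. This immediately gives $J^g(t,\zeta_t,u^*)\leq J^g(t,\zeta_t,u)$ for all admissible $u$, so $u^*$ attains the infimum in \eqref{optimal tilde J(u)} and is therefore optimal.

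For the second assertion, I would apply the same chain of arguments leading to \eqref{Masiero 5.8} with $\hat u$ in place of the generic control $u$: this is legitimate because the boundedness of $R$ in Hypothesis \ref{HP5.1 Mas} still allows the Girsanov change of measure to produce a probability $\mathbb P^{\hat u}$ under which $W^{\hat u}$ is Brownian and $\int_t^{\cdot}q_s^{\hat u}\nu\,dW_s^{\hat u}$ is a true martingale, so that
\begin{equation*}
v(t,\zeta_t)=J^g(t,\zeta_t,\hat u)+\E\!\left[\int_t^T\!\Big(\calH(s,\calZ_s^{\hat u},q_s^{\hat u})-q_s^{\hat u}\nu R^g(s,\calZ_s^{\hat u},\hat u_s)-F^g(s,\calZ_s^{\hat u},\hat u_s)\Big)ds\right].
\end{equation*}
The defining relation $\hat u_\tau=\Gamma_0(\tau,\calZ_\tau^{\hat u},q_\tau^{\hat u})$ together with the definition of $\Gamma$ in \eqref{Gamma} and of $\Gamma_0$ in Remark \ref{Gamma0} yields
\begin{equation*}
F^g(\tau,\calZ_\tau^{\hat u},\hat u_\tau)+q_\tau^{\hat u}\nu R^g(\tau,\calZ_\tau^{\hat u},\hat u_\tau)=\calH(\tau,\calZ_\tau^{\hat u},q_\tau^{\hat u})
\end{equation*}
for a.e.\ $\tau\in[t,T]$, $\mathbb P$-a.s., so the integrand in the identity above vanishes and I obtain $v(t,\zeta_t)=J^g(t,\zeta_t,\hat u)$. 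The first half of the statement then yields optimality of $\hat u$.

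The only genuinely delicate point, and the one I would verify carefully, is not the chain of substitutions but the well-posedness of the implicit equation $\hat u_\tau=\Gamma_0(\tau,\calZ_\tau^{\hat u},q_\tau^{\hat u})$: the processes $\calZ^{\hat u}$ and $q^{\hat u}$ themselves depend on $\hat u$, and one must use admissibility (predictability) of $\hat u$ together with Borel-measurability of $\Gamma_0$ to ensure that the controlled forward equation \eqref{Z_t controlled} and the associated BSDE are both well posed so that the identity \eqref{Masiero 5.8} actually applies. Existence of such a selector itself is not being claimed here: it is an assumption embedded in the statement (``an admissible control satisfying\ldots''), with Hypothesis \ref{Gamma non vuoto} and Remark \ref{Gamma0} guaranteeing only that the pointwise minimiser is available.
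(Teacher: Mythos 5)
Your argument is correct and is essentially the proof the paper delegates to \cite[Corollary 5.6]{Masiero}: the first claim is immediate from the inequality $J^g(t,\zeta_t,u)\geq v(t,\zeta_t)$ of the preceding proposition, and the second follows by inserting $\hat u$ into the identity \eqref{Masiero 5.8} and observing that the defining relation $\hat u_\tau=\Gamma_0(\tau,\calZ_\tau^{\hat u},q_\tau^{\hat u})$ makes the (nonpositive) integrand vanish. Your closing remark correctly identifies that the statement is conditional on the existence of an admissible control solving the implicit feedback relation, which is exactly how the paper treats it (existence being deferred to the closed-loop equation in Theorem \ref{Masiero 5.7}).
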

\begin{proof}
    The proof is in line with the one in \cite[Corollary 5.6]{Masiero}.
\end{proof}

\begin{teo}\label{Masiero 5.7}
	Assume that Hypothesis \ref{HP5.1 Mas}, \ref{HP4.1 Mas}, \ref{HP3.1 FT} and \ref{Gamma non vuoto} hold true. For all admissible controls $u$ in $\mathds A$, we have that
	\begin{equation*}
	{J^g}(t,\zeta_t,u)\geq v(t,\zeta_t),
	\end{equation*}
	and the equality holds true if and only if 
	\begin{equation}\label{5.12 Mas}
	\hat u_\tau\in\Gamma(\tau,\calZ^{u}_\tau,\nabla v(\tau,\calZ_\tau^u)\nu\sigma^g(\tau,\calZ^u_\tau)) \quad \mathbb{P}-a.s. \text{ for a.a. } \tau\in[t,T].
	\end{equation} 
	Moreover, let us denote by $\Gamma_0(\tau,z,\xi)$ be the measurable selection of $\Gamma(t,z,\xi)$ defined in Remark \ref{Gamma0}. A control satisfying the \emph{feedback law}, defined as:
	\begin{equation}\label{5.13 Mas}
	u_\tau=\Gamma_0(\tau,\calZ^{u}_\tau,\nabla v(\tau,\calZ^u_\tau)\nu\sigma^g(\tau,\calZ_\tau^u)) \quad \mathbb{P}-a.s. \text{ for a.a. } \ \tau\in[t,T]
	\end{equation}
	is optimal.
	Define the \emph{closed loop equation}:
	\begin{equation}\label{5.14 Mas}
		\begin{cases}
		\tZ_\tau=&\left[\calA^*\tZ_\tau+\nu\beta^g(\tau,\tZ_\tau)+\nu\sigma^g(\tau,\tZ_\tau)R^g\Bigg(\tau,\tZ_\tau,\Gamma_0\Big(\nabla v(\tau,\tZ_\tau)\nu\sigma^g(\tau,\tZ_\tau)\Big)\Bigg)\right]d\tau\\
		&+\nu \sigma^g(\tau,\tZ_\tau)dW_\tau, \ \ \tau\in[t,T]\\
		\tZ_t=&\zeta_t.
		\end{cases}
	\end{equation}
	Then \eqref{5.14 Mas} admits a weak solution which is unique in law, and the corresponding pair $(u,\tZ^u)$ is optimal.
\end{teo}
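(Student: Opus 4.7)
The plan is to build everything on the identity \eqref{Masiero 5.8} already established in the previous proposition, namely
\[
v(t,\zeta_t) \;=\; J^g(t,\zeta_t,u) \;+\; \E\!\left[\int_t^T \Big(\calH(s,\calZ_s^u,q_s^u) - q_s^u\,\nu\, R^g(s,\calZ_s^u,u_s) - F^g(s,\calZ_s^u,u_s)\Big)\,ds\right],
\]
valid for any admissible $u\in\mathds A$. By the definition of $\calH$ in \eqref{Hamilton}, the integrand is pointwise non-positive, which immediately yields $J^g(t,\zeta_t,u)\geq v(t,\zeta_t)$; moreover, the integrand vanishes $d\mathbb P\otimes d\tau$-a.e.\ if and only if $u_\tau\in\Gamma(\tau,\calZ_\tau^u,q_\tau^u)$. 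The next step, which is where the UMD Banach machinery pays off, is to rewrite $q_\tau^u$ via Proposition \ref{Masiero 4.4}: under the Girsanov measure $\mathbb P^u$ the process $\calZ^u$ satisfies the \emph{uncontrolled} forward equation driven by $W^u$, so the identification \eqref{identification} applies to $(p^u,q^u)$ and gives $q_\tau^u=\nabla v(\tau,\calZ_\tau^u)\,\nu\sigma^g(\tau,\calZ_\tau^u)$ for a.e.\ $\tau$. Substituting into the equality criterion produces \eqref{5.12 Mas}, completing the characterization.

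To handle the feedback law \eqref{5.13 Mas}, I would invoke Hypothesis \ref{Gamma non vuoto} and the Borel selector $\Gamma_0$ from Remark \ref{Gamma0}. Any predictable $u$ defined pointwise by $u_\tau=\Gamma_0\big(\tau,\calZ_\tau^u,\nabla v(\tau,\calZ_\tau^u)\,\nu\sigma^g(\tau,\calZ_\tau^u)\big)$ belongs to $\mathds A$ and satisfies $u_\tau\in\Gamma(\tau,\calZ_\tau^u,\nabla v(\tau,\calZ_\tau^u)\nu\sigma^g(\tau,\calZ_\tau^u))$ by construction, hence is optimal by the equality criterion just established; this matches Corollary \ref{Corollary 5.6 Masiero}.

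The genuinely delicate part is weak existence and uniqueness in law for the closed loop equation \eqref{5.14 Mas}. The plan is a standard Girsanov construction adapted to the UMD setting. On a probability space carrying a real Brownian motion $W$, first solve the uncontrolled lifted SDE
\[
d\tZ_\tau=\calA^*\tZ_\tau\,d\tau+\nu\beta^g(\tau,\tZ_\tau)\,d\tau+\nu\sigma^g(\tau,\tZ_\tau)\,dW_\tau,\qquad \tZ_t=\zeta_t,
\]
whose unique strong solution and adaptedness come from Proposition \ref{PROP 3.3 FT}. Then set $\rho_\tau:=R^g\big(\tau,\tZ_\tau,\Gamma_0(\tau,\tZ_\tau,\nabla v(\tau,\tZ_\tau)\,\nu\sigma^g(\tau,\tZ_\tau))\big)$; by Hypothesis \ref{HP5.1 Mas} it is bounded and predictable (because $\Gamma_0$ is Borel and $\tZ$ is adapted with continuous paths, while $\nabla v$ is continuous by Proposition \ref{Masiero 4.4}), so the Doléans--Dade exponential $\calE_T:=\exp\!\left(\int_t^T\rho_s\,dW_s-\tfrac12\int_t^T|\rho_s|^2\,ds\right)$ is a true martingale and defines $d\mathbb Q:=\calE_T\,d\mathbb P$. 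Under $\mathbb Q$ the shifted process $\widehat W_\tau:=W_\tau-\int_t^\tau \rho_s\,ds$ is a Brownian motion, and replacing $dW$ by $d\widehat W+\rho\,ds$ in the uncontrolled dynamics shows that $(\tZ,\widehat W,\mathbb Q)$ is a weak solution of \eqref{5.14 Mas}, as in Corollary \ref{existence Z controlled}. Uniqueness in law is obtained by reversing the procedure: for any weak solution $(\tZ',W',\mathbb Q')$ of \eqref{5.14 Mas}, the inverse density removes the drift $\nu\sigma^g R^g$ and turns $\tZ'$ into a weak solution of the uncontrolled equation, whose law is unique by Proposition \ref{PROP 3.3 FT}; pushing that unique law forward through the deterministic Girsanov density determines the law of $(\tZ',W')$. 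Finally, the control $u_\tau=\Gamma_0(\tau,\tZ_\tau,\nabla v(\tau,\tZ_\tau)\,\nu\sigma^g(\tau,\tZ_\tau))$ is by construction a feedback law satisfying \eqref{5.12 Mas}, so optimality of $(u,\tZ^u)$ follows from the equality criterion of the first paragraph. The main obstacle I expect is the measurability/predictability of the feedback process, since one composes the Borel selector $\Gamma_0$ with the Banach-valued map $(\tau,\omega)\mapsto(\tZ_\tau(\omega),\nabla v(\tau,\tZ_\tau(\omega))\nu\sigma^g(\tau,\tZ_\tau(\omega)))$; this is resolved by continuity of $v$ and $\nabla v$ from Proposition \ref{Masiero 4.4} together with path continuity of $\tZ$.
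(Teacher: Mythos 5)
Your proposal is correct and follows essentially the same route as the paper: the inequality and the equality criterion are read off from \eqref{Masiero 5.8} after substituting the identification $q^u_\tau=\nabla v(\tau,\calZ^u_\tau)\nu\sigma^g(\tau,\calZ^u_\tau)$ (the paper cites Proposition \ref{5.6 FT}, you cite Proposition \ref{Masiero 4.4}, which rests on the same identity \eqref{q(s,t,z)}), and the closed-loop equation is handled by the same Girsanov transformation reducing it to the uncontrolled forward equation of Proposition \ref{PROP 3.3 FT}, with Hypothesis \ref{Gamma non vuoto} and the Filippov selector giving the feedback control. Your write-up is in fact somewhat more explicit than the paper's on the boundedness of the Girsanov density, the reverse-Girsanov argument for uniqueness in law, and the predictability of the feedback process, all of which the paper delegates to the cited references.
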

\noindent For more details about the definition of \emph{feedback law} and \emph{closed loop equation} in the case of optimal control for Hilbert spaces we refer to \cite[Section 2.5]{GF}.
\begin{proof}
Using \eqref{Masiero 5.8} and Proposition \ref{5.6 FT} we can rewrite $v(t,\zeta_t)$ as 
	\begin{align*}
		v(t,\zeta_t)=&J^g(t,\zeta_t,u)+\int_t^T\left[\calH(\tau,\calZ^u_\tau,\nabla v(\tau,\calZ^u_\tau)\nu\sigma^g(\tau,\calZ^u_\tau))\right.\\
		&\left. -\nabla v(\tau,\calZ^u_\tau)\nu\sigma^g(\tau,\calZ^u_\tau)) R^g(\tau,\calZ_t^u,u_\tau)-F^g(\tau,\calZ^u_\tau,u_\tau)\right]d\tau.
	\end{align*} 
	The proof of the first statement now follows from Corollary \ref{Corollary 5.6 Masiero}.
	The closed loop equation can be solved in the weak sense via a Girsanov change of measure. 
	Recall that $(\Omega, \mathcal F, \mathbb P)$ is the probability space on which the Wiener process $(W_\tau)_{\tau\geq 0}$ in \eqref{FBSDE} is defined. Define $(\hat{W}_\tau)_{\tau\geq 0}$ as
	\begin{equation*}
		\hat W_t:= W_t-\int_0^t R^g\Bigg(\tau,\tilde\calZ_\tau^u,\Gamma_0\Big(\tau,\tilde{\calZ}_\tau,\nabla v(s,\tilde{\calZ}_\tau)\nu\sigma^g(\tau,\tilde{\calZ}_\tau) \Big) \Bigg)d\tau.
	\end{equation*}
	Due to the Girsanov theorem there exists a probability $\hat{\mathbb{P}}$ on $\Omega$ such taht $\hat W_\tau$ is a Wiener process. We then notice that $\nu W$ and $\nu \hat{W}$ are cylindrical Wiener processes with values in $Y^*$, and that the closed loop equation \eqref{5.14 Mas} can be rewritten under $\hat{\mathbb{P}}$ as 
	\begin{equation*}
		\begin{cases}
			d\tilde\calZ^u_\tau=&\calA^*\tilde\calZ_\tau^u d\tau+\nu\beta^g(\tau,\tilde\calZ^u_\tau)d\tau+\nu\sigma^g(\tau,\tilde\calZ^u_\tau)d\hat W_\tau,\\
			\calZ_t=&\zeta_t.
		\end{cases}
	\end{equation*}
	Then, thanks to Proposition \ref{PROP 3.3 FT}, we have a unique solution to this new process related to the probability $\hat{\mathbb{P}}$ and the Wiener process $\hat{W}_\tau$, which implies that also the closed loop equation \eqref{5.14 Mas} always admits a solution in the weak sense.
    Thanks to Hypothesis \ref{Gamma non vuoto}, we know that $\Gamma(\tau, \calZ_\tau^u, \nabla v(\tau, \calZ_\tau^u)\nu\sigma^g(\tau, \calZ_\tau^u))$ is non empty and thus, by the Filippov theorem, a measurable selection $\Gamma_0(\tau, \calZ_\tau^u, \nabla v(\tau, \calZ_\tau^u)\nu\sigma^g(\tau, \calZ_\tau^u))$ of $\Gamma$ exists and the optimal control can be obtained. This proof is in line with \cite[ Theorem 5.7]{Masiero} and \cite[Theorem 7.2]{FT}. 
    \end{proof}

\begin{obs}
	Notice that, having solved the lifted optimization problem, thanks to \eqref{lift}-\eqref{equivalence J(u)}, we have also solved the original problem \eqref{J(u)}-\eqref{X_t}. Indeed, we have that a control $\hat u$ which is optimal for \eqref{optimal tilde J(u)} where the dynamics for the forward process are given by $\calZ^u_\tau$ in \eqref{Z_t controlled} is also optimal for the original problem \eqref{J(u)}, as $J^g(t,\zeta_t,u)=J(t,x,u)$ by definition and $ X^u(t)=\langle g, \calZ_t^u\rangle$.
\end{obs}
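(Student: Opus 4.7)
The plan is to derive the remark as a direct consequence of the lift construction and the equality of performance functionals already established in \eqref{equivalence J(u)}. The argument is essentially a tautology once the lift is set up, and consists of three short steps: pathwise correspondence of state processes, pathwise and expected equality of cost functionals, and transfer of minimizers over the common admissible set $\mathds A$.

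First, I would fix the initial datum $(t,x)$ and recall from the discussion preceding \eqref{lift} that liftability furnishes $\zeta\in Y^*$ with $x(\tau)=\langle g,\calS_\tau^*\zeta\rangle$, so that the initial condition $\zeta_t=\calS_t^*\zeta$ of the lifted equation \eqref{Z_t controlled} is unambiguously determined by $x$. For each $u\in\mathds A$, Corollary \ref{existence Z controlled} gives a unique solution $\calZ^u$ of \eqref{Z_t controlled}, and applying $\langle g,\cdot\rangle$ to both sides of the mild form of \eqref{Z_t} reproduces exactly the Volterra equation \eqref{X_t} for $\calZ^{u,g}_\tau:=\langle g,\calZ^u_\tau\rangle$. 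By uniqueness of solutions to \eqref{X_t}, this yields the pathwise identification
\begin{equation*}
X^u(\tau)=\langle g,\calZ^u_\tau\rangle,\qquad \tau\in[t,T],\ \mathbb{P}\text{-a.s.},
\end{equation*}
for every admissible control $u$.

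Second, substituting this identification into the definitions of $F^g$ and $G^g$ gives the pathwise equality
\begin{equation*}
\int_t^T F^g(\tau,\calZ^u_\tau,u_\tau)\,d\tau+G^g(\calZ^u_T)=\int_t^T F(\tau,X^u(\tau),u_\tau)\,d\tau+G(X^u(T)),
\end{equation*}
which, after taking expectation, is exactly the identity $J^g(t,\zeta,u)=J(t,x,u)$ recorded in \eqref{equivalence J(u)}. Since both problems are infima over the same admissible class $\mathds A$ and the two functionals coincide pointwise on $\mathds A$, one trivially has
\begin{equation*}
\inf_{u\in\mathds A} J(t,x,u)=\inf_{u\in\mathds A} J^g(t,\zeta,u),
\end{equation*}
and $\hat u\in\mathds A$ attains one infimum if and only if it attains the other.

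Finally, applying Theorem \ref{Masiero 5.7} to the lifted problem yields such a minimizer $\hat u$, and the previous step transfers its optimality to the original problem \eqref{J(u)}--\eqref{X_t}; the corresponding optimal state is $X^{\hat u}(\tau)=\langle g,\calZ^{\hat u}_\tau\rangle$. The only point that requires a modicum of care is the pathwise identification $X^u\leftrightarrow\calZ^{u,g}$ for every $u\in\mathds A$, but this follows by the uniqueness part of Proposition \ref{PROP 3.3 FT} together with Corollary \ref{existence Z controlled}; no further analytic obstacle appears.
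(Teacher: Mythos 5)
Your proposal is correct and follows essentially the same route as the paper: the remark is an immediate consequence of the lift identities \eqref{lift} and \eqref{equivalence J(u)}, and your three steps (pathwise identification $X^u=\langle g,\calZ^u\rangle$, equality of the functionals on $\mathds A$, transfer of the minimizer) simply make explicit what the paper leaves implicit in the construction of Section 2.2 and Theorem \ref{Masiero 5.7}.
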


\section{A problem of optimal consumption}
    A cash flow admits consumption with rate $c$ according to the forward dynamics
	\begin{equation}\label{cashflow}
		X^c_t=x(t)+\int_0^tK(t-\tau)\ \mu(\tau, X^c_\tau) d\tau + \int_0^t K(t-\tau)\ \sigma(\tau,X_\tau^c)(-R(c_\tau) d\tau+dW_\tau),
	\end{equation}
	where $x:[0,T]\longrightarrow \R$, $K(t-\tau) = \sqrt{t-\tau}$, $\mu$ and $\sigma$ satisfy Hypothesis \ref{HP3.1 FT}, and $R$ satisfy Hypothesis \ref{HP5.1 Mas}. 
	In this case we lift $K(t):=\sqrt{t}$ on $L^q(\R)\times L^p(\R)$ for $q\in(1,2)$ and $p$ such that $\frac 1 p + \frac 1 q =1$ by considering
	$$e^{\calA^* t}:L^p(\R)\longrightarrow L^p(\R)$$
	the left shift semigroup defined as $(e^{\calA^* t} f)(s)=f(s-t)$, for all $f\in L^p(\R)$, $g(x)=\frac{1}{2\sqrt{x}}\mathds{1}_{[0,1]}(x)$, $\nu(x)=\mathds{1}_{[-1,0]}(x)$.
        Then we have that
        \begin{equation*}
	       K(t)=\langle g, e^{\calA^* t}\nu\rangle_{L^p(\R)\times L^q(\R)}=\int_\R \frac{1}{2\sqrt{x}}\mathds{1}_{[0,1]}(x)\mathds{1}_{[-1,0]}(x-t) dx =\int_0^t \frac{dx}{2\sqrt{x}}=\sqrt{t}.
	    \end{equation*}	
\noindent	We consider a classical optimal control problem given by the maximization of the performance functional
	\begin{equation}\label{functional J example}
		J(t,x,c)=\E\left[\int_t^TF(\tau,X_\tau^c,c_\tau)d\tau+G(X_T^c)\right],
	\end{equation}
	for some functions $F(\tau, X_\tau^c, c_\tau) = F(c_\tau):= -a_1 c_\tau^2$ and $G(X_t^c):= a_2 X_T^c$ ($a_1, a_2 \in \R_{>0}$) satisfying Hypothesis \ref{HP4.1 Mas}. Linear-quadratic performance functionals such as \eqref{functional J example} appear, for example, when considering optimal advertising problems (see e.g. \cite{GM2006, GMS2009} and \cite{Antony}). In this case we have that the stochastic control problem can be reformulated in $Y^*$ with forward dynamics given by

	\begin{equation}
		\calZ_\tau^c=\zeta_0+\int_0^\tau \calA^*\calZ_s^c+\nu\mu^g(s,\calZ_s^c)ds+\nu\sigma^g(s,\calZ_s^c)(-R(c_s) ds+dW_s),
	\end{equation}
	with $\zeta_t = e^{\calA^*t}\zeta $ such that $x(t) = \langle g,\zeta_t \rangle$ for all $\t$. The goal is to minimize
  	\begin{equation}\label{functional J example (lift)}
  	J^g(0,\zeta_0,c)=\E\left[\int_0^T-a_1 c_\tau^2 d\tau+a_2\langle g, \calZ^{c}_T\rangle\right].
  	\end{equation}
  	In this case the Hamiltonian functional \eqref{Hamilton} is given by
  	\begin{equation}\label{Hamilton example}
  		\calH(t,\calZ,\xi)=\inf_{c\in\calU}[-a_1 c^2-\xi \nu R (c)]
   	\end{equation}
	and the forward-backward system is 
	\begin{equation}\label{FBSDE example}
		\begin{cases}
		d\calZ_\tau&=\calA^*\calZ_\tau d\tau+\nu\mu^g(s,\calZ_\tau)d\tau+\nu\sigma^g(s,\calZ_s) dW_\tau, \quad \tau\in[0,T],\\
		\calZ_0&=\zeta_0,\\
		dp_\tau&=-\calH(\tau,\calZ_\tau^c,q_\tau)d\tau+q_\tau \nu dW_\tau,\quad \tau\in[t,T],\\
		p_T&=a_2\langle g, \calZ_T \rangle.
		\end{cases}
	\end{equation}
	In particular, using \eqref{Hamilton example}, we have that
	\begin{equation*}
	p_t=a_2\langle g, \calZ_T \rangle-\int_t^T\inf_{c\in\calU}\left(- a_1 c^2-q_s\nu R(c)\right)ds+q_s \nu dW_s \ s\in[t,T].
	\end{equation*}
	We thus get that the set $\Gamma$ defined in \eqref{Gamma} is
	\begin{equation}\label{Gamma example}
		\Gamma (t,\calZ,\xi)=\Big\{c\in\calU : -a_1 c^2-\xi\nu R(c)=\calH(t,\calZ,c) \Big\},
	\end{equation}
	and thus the optimal $u_\tau$ can be characterized by Theorem \ref{Masiero 5.7} as 
	\begin{equation}\label{Equality example}
		c_\tau=\Gamma_0(\tau,\calZ^{c}_\tau,\nabla v(\tau,\calZ_\tau^c)\nu\sigma^g(\tau,\calZ_\tau^c)) \quad  \ \mathbb{P}-a.s. \text{ for a.a. } \tau\in[0,T],
	\end{equation} 
	for a certain function $\Gamma_0$ such that $\Gamma_0(t,\calZ,\xi)\in \Gamma(t,\calZ,\xi)$.
	In this case the HJB equations \eqref{HJB} become
	\begin{equation}\label{HJB example}
		\begin{cases}
		\frac{\partial v}{\partial t}(t,\zeta_0)&=-\mathcal{L}_tv(t,\calZ(t,0,\zeta_0))\\&\quad-\inf_{c\in\calU}[F^g(t,\calZ(t,0,\zeta_0),c_t)-\nabla v(t,\calZ(t,0,\zeta_0)) \nu\sigma(t,\calZ(t,0,\zeta_0)) c_t],\\
		v(T,\zeta_0)&=a_2 X(0).
		\end{cases}
	\end{equation}
	Where, we remind, $\calZ(\tau,0,\zeta_0)=\calZ_\tau$, $\tau\in[t,T]$, $\calZ(0)=\zeta_0$ and 
	\begin{align*}
		\mathcal L_t[v](t,t,z)&:=\frac 12 \text{Trace}(G^g(v(t,z)G^g(v(t,z))^* \ \nabla^2v(t,t,z))\\&\quad +\langle \calA^*v(t,t,z)+\nu\beta^g(t,v(t,t,z)),\nabla v(t,t,z) \rangle_{L^q([0,\infty))\times L^p([0,\infty))}.
	\end{align*}
 For more details about solving the HJB equation \eqref{HJB example} we refer to \cite{Antony}. Now, thanks to Theorem \ref{HJB Teo} we have that
	\begin{teo}
		Equation \eqref{HJB example} has a unique mild solution $v$. If the cost is given by \eqref{functional J example (lift)}, then for all admissible couples $(c,z)$ we have that $J(t,z,c)\geq v(t,z)$, and the equality holds if and only if 
$\hatc \in \Gamma(t,\calZ,\xi)$ in \eqref{Gamma example}, characterized as \eqref{Equality example}.
Vice versa, if \eqref{Equality example} holds, then 
 \begin{align*}
     \tilde \calZ_\tau &= \calA^*\tilde \calZ_\tau +\nu\mu^g(\tau,\calZ_\tau^c)d\tau\\
     &\quad +\nu\sigma^g(\tau,\calZ_\tau^c)\Big(-R(\Gamma_0(\tau,\tilde\calZ^{c}_\tau,\nabla v(\tau,\tilde\calZ_\tau^c)\nu\sigma^g(\tau,\tilde\calZ_\tau^c))) d\tau+dW_\tau\Big), \quad \tau\in[0,T],
 \end{align*} 
 with initial condition $\tilde\calZ_0 = \zeta_0$, admits a weak solution, which is unique in law, and the corresponding pair $(c,\tilde \calZ^c)$ is optimal.
	\end{teo}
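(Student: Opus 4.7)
The strategy is to recognize this statement as a direct application of \cref{HJB Teo} and \cref{Masiero 5.7} to the particular choices of coefficients, kernel, and cost specified in \eqref{cashflow}-\eqref{functional J example}. The first task, therefore, is to verify that the structural hypotheses required by those two theorems are in force in this concrete setting. Specifically, I would check: (a) that the kernel $K(t)=\sqrt{t}$ is liftable as in \cref{liftable} with the explicit $g$, $\nu$ and semigroup $e^{\calA^* t}$ exhibited in the example, yielding the ambient space $Y^*=L^p(\R)$; (b) that $L^p(\R)$, for $p\in(1,\infty)$, is a UMD Banach space, so that \cref{UMD} holds; (c) that Hypotheses \ref{HP3.1 FT}, \ref{HP5.1 Mas} and \ref{HP4.1 Mas} are satisfied by $\mu^g$, $\sigma^g$, $R$, $F^g(\tau,z,c)=-a_1c^2$ and $G^g(z)=a_2\langle g,z\rangle$ (the latter two being, respectively, continuous with quadratic growth in $c$ and linear-bounded in $z$, so that the required Gâteaux regularity and Lipschitz bounds in \cref{HP4.1 Mas} follow).

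Once these hypotheses are in place, the existence and uniqueness of the mild solution $v$ to the HJB equation \eqref{HJB example} is an immediate invocation of \cref{HJB Teo}, which also identifies $v(t,\zeta_t)=p(t,t,\zeta_t)$ with $(p,q)$ the unique solution of the associated backward SDE in \eqref{FBSDE example}. The inequality $J^g(t,z,c)\geq v(t,z)$, valid for every admissible $(c,z)$, and the characterization that equality holds if and only if $c$ takes values in the set $\Gamma$ of \eqref{Gamma example}, then follow verbatim from \cref{Masiero 5.7} applied to the lifted cost \eqref{functional J example (lift)}; the characterization \eqref{Equality example} via the Borel selector $\Gamma_0$ of \cref{Gamma0} uses \cref{Gamma non vuoto}, which I would verify separately by noting that the Hamiltonian \eqref{Hamilton example} is the infimum of a continuous function of $c$ over the closed convex set $\calU$, and that the explicit quadratic structure ensures the infimum is attained.

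For the converse statement about the closed loop equation, I would reproduce the Girsanov argument in the proof of \cref{Masiero 5.7}: defining $\hat W$ by subtracting the drift $R^g(\cdot,\tilde\calZ,\Gamma_0(\cdot,\tilde\calZ,\nabla v(\cdot,\tilde\calZ)\nu\sigma^g(\cdot,\tilde\calZ)))$, the boundedness of $R$ from \cref{HP5.1 Mas} makes the Doléans exponential a true martingale (cf.\ \cite[Theorem 10.14]{DaPrato_Soluzione}), so under the equivalent measure $\hat{\mathbb P}$ the equation reduces to the uncontrolled SDE covered by \cref{PROP 3.3 FT}. Strong existence and uniqueness under $\hat{\mathbb P}$ pass back to weak existence and uniqueness in law under $\mathbb P$; optimality of the resulting pair $(c,\tilde\calZ^c)$ is then a consequence of \cref{Corollary 5.6 Masiero} together with the identification $q_\tau=\nabla_z v(\tau,\calZ_\tau)\nu\sigma^g(\tau,\calZ_\tau)$ from \cref{Masiero 4.4}.

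The only genuinely non-mechanical step I anticipate is the verification of \cref{Gamma non vuoto}, i.e.\ showing that $\Gamma(t,z,\xi)$ in \eqref{Gamma example} is non-empty for all arguments. If $\calU$ is bounded this is trivial by continuity and compactness; for unbounded $\calU$ one needs to check coercivity of $-a_1c^2-\xi\nu R(c)$ in $c$, which in turn depends on the precise form of $R$. This is where assumptions on $R$ beyond mere boundedness would be used, and once this measurable selection exists the rest of the argument is a straightforward transcription of the general theory to the current concrete setting.
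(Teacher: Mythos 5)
Your plan is correct and coincides with the paper's own treatment: the paper states this theorem with no separate proof, deriving it directly as an application of Theorem \ref{HJB Teo} and Theorem \ref{Masiero 5.7} to the concrete lifted setting, which is exactly the route you take. Your additional remarks on verifying the UMD property of $L^p(\R)$, the liftability of $K(t)=\sqrt{t}$, and the non-emptiness of $\Gamma$ in \eqref{Gamma example} are sensible sanity checks that the paper leaves implicit.
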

	\begin{obs}
		This gives us a characterization of the optimal control for the lifted problem \eqref{functional J example (lift)} and thus also for \eqref{functional J example}: in fact in our case we have that $X_t=\langle 1, \calZ_t \rangle$. Thanks to this we are able to find the optimal process $X^{\hatu}$. Moreover, we note that, the optimal control for the lifted problem and the optimal control for the original problem coincide., This allows us to retrieve the optimal pair $(\hatu,X^{\hatu})$ for the optimal control problem \eqref{functional J example}. Lastly we notice that the HJB equations \eqref{HJB example}, gives us the optimal value $J^g(0,\zeta_0,\hatu)$, which in turn gives us the optimal value of $J(t,x,\hatu)$, thanks to \eqref{equivalence J(u)}.
	\end{obs}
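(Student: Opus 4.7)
The plan is to recognize the consumption problem as a particular instance of the general framework of Sections 3--4 and deduce all three assertions from Theorem \ref{HJB Teo} and Theorem \ref{Masiero 5.7} after verifying the structural hypotheses on the specific data. The state space $Y^* = L^p(\R)$ with $p\in(2,\infty)$ is reflexive, hence UMD, fulfilling Hypothesis \ref{UMD}; the shift semigroup together with $(g,\nu)$ realises the lift $K(t)=\sqrt{t} = \langle g, e^{\calA^* t}\nu\rangle$ as computed in the excerpt, so Definition \ref{liftable} holds. Hypothesis \ref{HP3.1 FT} on $\mu^g$ and $\sigma^g$ is part of the problem's data, while boundedness of $R$ provides Hypothesis \ref{HP5.1 Mas}.

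I would then check Hypothesis \ref{HP4.1 Mas} for $G^g$ and $\calH$. For $G^g(z) = a_2\langle g, z\rangle$ the Gâteaux regularity is immediate and one has the Lipschitz bound $|G^g(z_1)-G^g(z_2)|\leq a_2\|g\|_{L^q(\R)}\|z_1-z_2\|_{L^p(\R)}$. For $\calH(\tau,z,\xi)=\inf_{c\in\calU}[-a_1c^2-\xi\nu R(c)]$, which is independent of $z$, boundedness of $R$ yields Lipschitzianity in $\xi$ uniformly in $\tau$ with constant controlled by $\|R\|_\infty\|\nu\|_{Y^*}$; $\calH(\cdot,0,0)$ is the constant $\inf_c(-a_1c^2)$, assumed finite so that $\Gamma$ in \eqref{Gamma example} is non-empty (Hypothesis \ref{Gamma non vuoto}); $\nabla_z\calH\equiv 0$ trivially satisfies the growth bound; and $\calH\in\calG^{1,1}$ in $(z,\xi)$ follows from a standard envelope argument combined with the measurable selection $\Gamma_0$ of Remark \ref{Gamma0}.

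With the hypotheses secured, Theorem \ref{HJB Teo} produces the unique mild solution $v$ of \eqref{HJB example} with $v(t,\zeta_t)=p(t,t,\zeta_t)$, the backward component of \eqref{FBSDE example}. Theorem \ref{Masiero 5.7} then yields $J^g(t,\zeta_t,c)\geq v(t,\zeta_t)$ for every admissible $c$, with equality if and only if $c_\tau\in\Gamma(\tau,\calZ^c_\tau,\nabla v(\tau,\calZ^c_\tau)\nu\sigma^g(\tau,\calZ^c_\tau))$ almost surely for a.e. $\tau$; via the selection $\Gamma_0$ this becomes the feedback formula \eqref{Equality example}. Because $J(t,x,c)=J^g(t,\zeta_t,c)$ through the identification \eqref{equivalence J(u)} with $X^c_t=\langle g,\calZ^c_t\rangle$, the inequality and its equality condition transfer verbatim to the original functional \eqref{functional J example}, yielding the first two assertions.

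For the converse, the stated closed-loop equation is exactly \eqref{5.14 Mas} for the example. Following the scheme of Theorem \ref{Masiero 5.7}, I would define $\hat W_\tau := W_\tau - \int_0^\tau R(\Gamma_0(s,\tilde\calZ_s,\nabla v(s,\tilde\calZ_s)\nu\sigma^g(s,\tilde\calZ_s)))\,ds$ and apply Girsanov's theorem (valid thanks to boundedness of $R$, with $\nu W$ and $\nu\hat W$ cylindrical Wiener processes on $\mathbb{H}^\nu\subset Y^*$, cf. Remark \ref{remark brownian}) to obtain $\hat{\mathbb P}\sim\mathbb P$ under which $\hat W$ is Brownian. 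The closed-loop equation then reduces to the uncontrolled lifted forward equation of Proposition \ref{PROP 3.3 FT}, yielding a unique strong solution; returning to $\mathbb P$ gives the weak solution, unique in law, and the saturation of \eqref{Equality example} makes the pair $(c,\tilde\calZ^c)$ optimal. The main delicate point is the verification of Hypothesis \ref{HP4.1 Mas}(3) --- that $\calH$ is jointly $\calG^{1,1}$ --- together with the implicit requirement on $\calU$ guaranteeing non-emptiness of $\Gamma$: since $-a_1c^2$ is unbounded below, a suitable constraint on $\calU$ (for instance compactness, or a matching sign condition on $R$) must be assumed for the infimum in \eqref{Hamilton example} to be attained and the measurable selection to exist; once this is in place, all remaining steps are routine applications of the general theorems.
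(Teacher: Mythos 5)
Your argument is correct and follows the paper's own route: the remark rests precisely on the identification $J(t,x,u)=J^g(t,\zeta_t,u)$ from \eqref{equivalence J(u)} together with $X^u_\tau=\langle g,\calZ^u_\tau\rangle$ from \eqref{lift}, which is exactly the transfer you carry out after invoking Theorem \ref{HJB Teo} and Theorem \ref{Masiero 5.7}, while your verification of Hypotheses \ref{HP5.1 Mas}, \ref{HP4.1 Mas}, \ref{HP3.1 FT} and the Girsanov step reproduce the proof of the theorem immediately preceding the remark rather than the remark itself. Your caveat that $\inf_{c\in\calU}(-a_1c^2)$ forces a boundedness (or compactness) restriction on $\calU$ for Hypothesis \ref{Gamma non vuoto} to hold is a genuine point the paper leaves implicit, and you are right that the pairing should read $\langle g,\calZ_t\rangle$ with $g(x)=\tfrac{1}{2\sqrt{x}}\mathds{1}_{[0,1]}(x)$ rather than the paper's $\langle 1,\calZ_t\rangle$.
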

	
	\begin{obs}
        Inspired by \cite{fiacco}, we could also have considered the kernel $K(t)=\frac{1}{t+\varepsilon}$, $\varepsilon>0$ in \eqref{cashflow}. In this case we can take the space of $L^p([0,\infty))$ of $L^p$ functions on $[0,\infty)$ and its dual $L^q([0,\infty))$ of measures with density in $L^q$, where $\frac 1 p + \frac 1 q =1 $ and $p> 1$. We have that 
	\begin{equation*}
	    K(t)=\frac{1}{t+\varepsilon}=\langle g, \calS_t^* \nu\rangle_{L^p([0,\infty))\times L^q([0,\infty))},
	\end{equation*}
    where $g=\nu=e^{-x\varepsilon/2}$, and $\calS_t^*=e^{-tx}$ i.e. $K(t)$ is the Laplace trasnform of $e^{-x\varepsilon}$. We notice that kernel is liftable (see Definition \ref{liftable}) and that we are in a UMD Banach space.
    It is clear that $e^{-\varepsilon x/2}$ is actually in $L^p([0,\infty))$ for all $p\geq 1$. In particular we can also  take $p=q=2$ and work on the Hilbert space $L^2([0,\infty))$.
	\end{obs}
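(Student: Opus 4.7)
The plan is to verify directly the three bullets of Definition \ref{liftable} with the explicit choices $Y = L^p([0,\infty))$, $Y^* = L^q([0,\infty))$ where $1/p+1/q=1$, $p>1$, $g(x)=\nu(x)=e^{-x\varepsilon/2}$, and $\calS_t^*$ the multiplication semigroup $(\calS_t^* f)(x)=e^{-tx}f(x)$; and then to invoke the classical Burkholder result that $L^q([0,\infty))$ is UMD.

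First I would verify the Laplace identity by direct computation:
\begin{equation*}
\langle g,\calS_t^*\nu\rangle_{L^p\times L^q} = \int_0^\infty e^{-x\varepsilon/2}\cdot e^{-tx}\cdot e^{-x\varepsilon/2}\,dx = \int_0^\infty e^{-x(t+\varepsilon)}\,dx = \frac{1}{t+\varepsilon}=K(t),
\end{equation*}
where splitting the weight $e^{-x\varepsilon}$ symmetrically between $g$ and $\nu$ produces the desired exponent $\varepsilon/2$. This settles the first bullet.

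For the remaining two bullets, $g=\nu=e^{-x\varepsilon/2}$ lies in $L^r([0,\infty))$ for every $r\geq 1$ since $\|e^{-x\varepsilon/2}\|_r^r = 2/(r\varepsilon)<\infty$, hence $g\in L^p$ and $\nu\in L^q$. Next, $(\calS_t^*\nu)(x) = e^{-x(t+\varepsilon/2)}$ belongs to $L^q$ for all $t>0$, and an explicit computation gives $\|\calS_t^*\nu\|_q^q = 2/\bigl(q(2t+\varepsilon)\bigr)$, which is bounded above by $2/(q\varepsilon)$ uniformly in $t\geq 0$. Consequently $\int_0^t \|\calS_s^*\nu\|_q^2\,ds \leq t\bigl(2/(q\varepsilon)\bigr)^{2/q}<\infty$ for every $t>0$, yielding the third bullet. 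The family $\{\calS_t^*\}_{t\geq 0}$ is a contractive $C_0$-semigroup on $L^q$ with (unbounded) generator the multiplication operator $\calA^* f = -xf$, the semigroup identity $\calS_{t+s}^* = \calS_t^*\calS_s^*$ being pointwise immediate from $e^{-(t+s)x}=e^{-tx}e^{-sx}$.

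The UMD property of $L^q([0,\infty))$ for $q\in(1,\infty)$ is classical (see the UMD references cited in Section 2.3). The one delicate point in the verification is that $\calS_t^*$ is only $C_0$ and not uniformly continuous in operator norm, since its generator is unbounded; this is however consistent with the convention used throughout the paper (cf.\ the shift semigroup employed for $K(t)=\sqrt t$, which is likewise only $C_0$) and in any case the weaker framework of Remark \ref{HP Cuchiero} accommodates the multiplication semigroup, so no additional obstacle arises and the lifted machinery of the previous sections applies unchanged.
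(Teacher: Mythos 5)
Your verification is correct and follows the same direct route the paper's remark implicitly relies on: the Laplace-transform identity $\int_0^\infty e^{-x(t+\varepsilon)}\,dx=1/(t+\varepsilon)$, the explicit $L^r$-norms showing $g\in L^p$, $\nu\in L^q$ and $\int_0^t\|\calS_s^*\nu\|_{q}^2\,ds<\infty$, and the classical UMD property of $L^q([0,\infty))$ for $q\in(1,\infty)$. Your observation that the multiplication semigroup is only strongly continuous (its generator $f\mapsto -xf$ being unbounded), and hence does not literally satisfy the uniform-continuity requirement of Definition \ref{liftable}, correctly identifies a looseness that the paper itself shares with its shift-semigroup example for $K(t)=\sqrt{t}$, and your resolution --- noting that $\nu=e^{-x\varepsilon/2}$ lies in $Y^*=L^q$ itself so the weaker framework of Remark \ref{HP Cuchiero} applies without obstructing the HJB formulation --- is the appropriate one.
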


	\subsection*{Acknowledgment} We would like to thank Anton Yurchenko-Tytarenko and Dennis Schroers for the nice input on kernel decompositions. The research leading to these results is within the project STORM: Stochastics for Time-Space Risk Models, receiving founding from the Research Council of Norway (RCN). Project number: 274410.

\bibliographystyle{plain}
\bibliography{bib}

\end{document}